\theoremstyle{plain}
\newtheorem{thm}{Theorem}[section]
\newtheorem{cor}[thm]{Corollary}
\newtheorem{lem}[thm]{Lemma}
\newtheorem{prop}[thm]{Proposition}
\newtheorem{defn}[thm]{Definition}
\newtheorem{exa}[thm]{Example}
\newtheorem{rem}[thm]{Remark}
\newtheorem{ques}[thm]{Question}
\begin{document}

\title [$EM-$Graded Rings]{$EM-$Graded Rings}

\author[{{T. Alraqad}}]{\textit{Tariq Alraqad}}

\address
{\textit{Tariq Alraqad, Department of Mathematics, University of Hail, Saudi Arabia.}}
\bigskip
{\email{\textit{t.alraqad@uoh.edu.sa}}}

 \author[{{H. Saber}}]{\textit{Hicham Saber}}

\address
{\textit{Hicham Saber, Department of Mathematics, University of Hail, Saudi Arabia.}}
\bigskip
{\email{\textit{hicham.saber7@gmail.com}}}

 \author[{{R. Abu-Dawwas }}]{\textit{Rashid Abu-Dawwas }}

\address
{\textit{Rashid Abu-Dawwas, Department of Mathematics, Yarmouk
University, Jordan.}}
\bigskip
{\email{\textit{rrashid@yu.edu.jo}}}

 \subjclass[2010]{13A02, 16W50}

\date{}

\begin{abstract}
The main goal of this article is to introduce the concept of $EM-G-$graded rings. This concept is an extension of the notion of $EM-$rings. Let $G$ be a group and $R$ be a $G-$graded commutative ring.  The $G-$gradation of $R$ can be extended to $R[x]$ by taking the components $(R[x])_{\sigma}=R_{\sigma}[x]$. We define $R$ to be $EM-G-$graded ring if every homogeneous zero divisor polynomial has an annihilating content. We provide examples of $EM-G-$graded rings that are not $EM-$rings and we prove some interesting results regarding these rings.
\end{abstract}

\keywords{Graded rings, EM-rings,, homogeneous, polynomial ring, annihilating content.
% \\$*$ Corresponding author
 }
 \maketitle

\section{Introduction}\label{int}
Throughout this article, all rings are commutative rings with one. The set of all zero divisors of a ring $R$ is $Z(R)$, the set of all regular elements is denoted by $reg(R)$, and the set of all idempotent elements is denoted by $E(R)$. A zero divisor polynomial $f(x)\in R[x]$ is said to have an annihilating content if there are $c_f\in Z(R)$ and $f_1\in reg(R[x])$ such that $f(x)=c_ff_1(x)$. Annihilating contents simplify computations of annihilator of polynomials. For instance, in \cite{ab17} annihilating contents are used to obtain some results related to the zero divisors graph of $R[x]$, where $R$ is a principal ideal rings. As a generalization, Abuosba and Ghanem \cite{ab19} introduce the concept of $EM-$rings as follows: A ring $R$ is called $EM-$ring if every zero divisor polynomial in $R[x]$ has an annihilating content. These rings were studied extensively in \cite{ab19} and \cite{gh18}.

Our goal in this article is to extend the notion of $EM-$rings by using the concept of graded rings. Let $G$ be a group. A ring $R$ with unity $1$, is said to be  $G$-graded if there exist additive subgroups $\{R_{\sigma}\mid \sigma\in G\}$ such that $\displaystyle R=\oplus_{\sigma\in G}R_{\sigma}$ and $R_{\sigma}R_{\tau}\subseteq R_{\sigma\tau}$ for all $\sigma,\tau\in G$. This gradation is denoted by $(R,G)$. The elements of $R_{\sigma}$ are called homogeneous of degree $\sigma$. The set of all homogeneous elements is denoted by $h(R)$, and the set of homogenous zero divisors is denoted by $hZ(R)$.  If $x\in R$, then $x$ can be written uniquely as $\displaystyle\sum_{\sigma\in G}x_{\sigma}$, where $x_{\sigma}$ is the component of $x$ in $R_{\sigma}$.

If $R$ is a $G-$graded ring, then $R[x]$ is $G-$graded with gradation defined by $(R[x])_\sigma=R_\sigma[x]$, $\sigma\in G$. Clearly, $R[x]=\oplus_{\sigma\in G}R_{\sigma}[x]$, and $R_{\sigma}[x]R_{\tau}[x]\subseteq R_{\sigma\tau}[x]$, for all $\sigma,\tau\in G$. Furthermore, $f(x)=\sum_{i=0}^{n}a_ix^i\in h(R[x])$ if and only if there exists $\sigma\in G$ such that $a_i\in R_{\sigma}$ for all $i$. i.e. all coefficients of $f$ are homogenous of the same degree. We will say that $R$ is an $EM-G-$graded ring if every zero divisor homogeneous polynomial in $R[x]$ has an annihilating content. It is evident that every $EM-$ring (that is $G-$graded) is an $EM-G-$graded ring, however the converse is not true as we will see in Examples \ref{e1} and \ref{e2}.

The next section includes some preliminary results on graded rings and $EM-$rings that will be needed in the following section. Section \ref{sec1} is devoted to the concept of $EM-G-$graded rings and its properties. We show that if $R_e$ is an $EM-$ring and the other components satisfy some certain condition then $R$ is $EM-G-$graded ring. We deduce that if $(R,G)$ is a cross product and $R_e$ is an $EM-$ring then $R$ is an $EM-G-$graded ring. Some extensions of $EM-G-$graded rings are also investigated. Some of these results are similar to results obtained about $EM-$ring in \cite{ab19}. We show that if $R$ is $EM-G$-graded ring then so is $R[x]$. We also prove that if $R$ is an $EM-G$-graded ring and  $S\subseteq h(R)$ is multiplicatively closed then $S^{-1}R$ is $EM-G-$graded ring. In this section we also obtain a nice result related to the idealization $R(+)R$ that generalizes \cite[Theorem 5.1]{gh18}. We show that $R$ is an $EM-$ring if and only if $R(+)R$ is an $EM-Z_2-$graded ring with gradation $H_0=R\oplus 0$ and $H_1=0\oplus R$.

\section{Preliminaries}\label{sec0}
In this section we list some results on graded rings and $EM-$rings that will be needed in the squeal. For a general reference on graded rings the reader is referred to \cite{Nastasescue}.

\begin{defn}\label{def0}
Let $G$ be a group. A ring $R$ with unity $1$, is said to be  $G$-graded if there exist additive subgroups $\{R_{\sigma}\mid \sigma\in G\}$ such that $\displaystyle R=\oplus_{\sigma\in G}R_{\sigma}$ and $R_{\sigma}R_{\tau}\subseteq R_{\sigma\tau}$ for all $\sigma,\tau\in G$.
\end{defn}
When $R$ is $G$-graded we denote that by $(R,G)$. The support of $(R,G)$ is defined as $supp(R,G)=\{\sigma\in G:R_{\sigma}\neq0\}$. If $x\in R$, then $x$ can be written uniquely as $\displaystyle\sum_{\sigma\in G}x_{\sigma}$, where $x_{\sigma}$ is the component of $x$ in $R_{\sigma}$. It is well known that $R_{e}$ is a subring of $R$ with $1\in R_{e}$. An ideal $A$ of a $G-$graded ring $R$ is called $G-$graded ideal provided that $A=\oplus_{\sigma\in G}(A\cap R_{\sigma})$. In case $A$ is a $G-$graded ideal of a $G-$graded ring $R$ then the factor ring $R/A$ is a $G-$graded ring with gradation defined by $(R/A)_{\sigma}=R_{\sigma}+A/A$.

For a polynomial $f(x)\in R[x]$, we denote by $C(f)$ the ideal generated by the coefficients of $f$. Abuosba and Ghanem \cite{ab19} noted that if $c_f$ is an annihilating content of $f$ then $Ann_{R[x]}(f(x))=Ann_{R[x]}(c_f)$ and $Ann_R(C(f))=Ann_R(c_f)$. Next we show that if $f$ is homogenous then $C(f)$ is a $G-$graded ideal.

\begin{lem}\label{l2} Let $R$ be a $G-$graded ring and consider the $G-$grading on $R[x]$ whose components are $R([x])_{\sigma}=R_{\sigma}[x]$, $\sigma\in G$. If $f(x)=\sum_{i=0}^na_ix^i\in h(R[x])$ then $C(f)$ is a $G-$graded ideal of $R$. \end{lem}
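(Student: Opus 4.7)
The plan is to exploit the fact that $f$ being homogeneous in the grading $(R[x])_\sigma = R_\sigma[x]$ forces all its coefficients to lie in a single homogeneous component $R_\sigma$, and then to verify directly that the ideal they generate decomposes into graded pieces.

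First I would observe that, by the characterization of homogeneous polynomials recalled in the introduction, there exists a single $\sigma\in G$ such that $a_i\in R_\sigma$ for every $i=0,\dots,n$. Thus $C(f)=\sum_{i=0}^{n} a_i R$, an ideal generated by elements all living in $R_\sigma$.

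Next I would verify the defining condition $C(f)=\bigoplus_{\tau\in G}(C(f)\cap R_\tau)$ in its equivalent form: every homogeneous component of an element of $C(f)$ again lies in $C(f)$. Take an arbitrary $y\in C(f)$ and write $y=\sum_{i=0}^{n} a_i r_i$ with $r_i\in R$. Decompose each $r_i=\sum_{\tau\in G} r_{i,\tau}$ with $r_{i,\tau}\in R_\tau$. Using $a_i\in R_\sigma$ and $R_\sigma R_\tau\subseteq R_{\sigma\tau}$, I get $a_i r_{i,\tau}\in R_{\sigma\tau}$, so for each $\mu\in G$ the degree-$\mu$ component of $y$ equals
\[
y_\mu=\sum_{i=0}^{n} a_i\, r_{i,\sigma^{-1}\mu},
\]
which is visibly an $R$-linear combination of the $a_i$ and therefore lies in $C(f)$. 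Uniqueness of homogeneous decomposition in $R$ then gives the desired direct-sum decomposition of $C(f)$.

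There is no real obstacle here; the argument is a direct unwinding of the definition of a graded ideal once one notices that homogeneity of $f$ in $R[x]$ concentrates all coefficients in a single $R_\sigma$. The only mild care needed is in tracking the index shift $\tau\mapsto\sigma^{-1}\mu$ when collecting components of a fixed degree $\mu$, which works uniformly for arbitrary (not necessarily abelian) $G$.
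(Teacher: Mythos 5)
Your proof is correct and follows essentially the same route as the paper's: fix the common degree $\sigma$ of the coefficients, expand the $r_i$ into homogeneous components, and observe that the degree-$\mu$ part of $y$ is again an $R$-linear combination of the $a_i$. In fact your explicit tracking of the reindexing $\tau\mapsto\sigma^{-1}\mu$ is slightly more careful than the paper's own write-up, which elides that shift.
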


\begin{proof}
Since $f$ is homogeneous, there is $\tau\in G$ such that $\{a_0,\dots,a_n\}\subseteq R_{\tau}$. Let $w=\sum_{\sigma\in G}w_{\sigma}\in C(f).$ Then we can write $w$ as $w=\sum_{i=o}^nr_ia_i$. In addition, each $r_i$ can be written as $r_i=\sum_{\sigma\in G}r_{\sigma,i}$. So, we have
$$\sum_{\sigma\in G}w_{\sigma}=\sum_{i=0}^n\left(\sum_{\sigma\in G}r_{\sigma,i}\right)a_i=\sum_{\sigma\in G}\left(\sum_{i=0}^nr_{\sigma,i}a_i\right).$$
Now, for each $\sigma\in G$ we have $r_{\sigma}$ and $ia_i$ belong to $R_{\sigma}R_{\tau}\subseteq R_{\sigma\tau}$. Therefore $w_{\sigma}\in C(f)$, for all $\sigma\in G$. Hence $C(f)$ is $G-$graded ideal of $R$.
\end{proof}

\begin{lem}\label{l1} Let $R$ be a $G-$graded ring. If for every $\sigma\in supp(R,G)$, there exists $u_{\sigma}\in R_{\sigma}$ such that $R_{\sigma}=R_eu_{\sigma}$ and $Ann_{R_e}(u_{\sigma})=\{0\}$, then $reg(R_e[x])\subseteq reg(R[x])$.
\end{lem}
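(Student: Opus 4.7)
The plan is to take a regular element $f \in R_e[x]$, assume $f\cdot g = 0$ for some $g \in R[x]$, and show $g = 0$ by working component by component in the $G$-grading on $R[x]$.

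First, I would extend the $G$-grading to $R[x]$ with components $R_\sigma[x]$ as in the setup, and decompose $g = \sum_{\sigma \in G} g_\sigma(x)$ with $g_\sigma(x) \in R_\sigma[x]$. Since $f$ has all coefficients in $R_e$, we have $f \cdot g_\sigma(x) \in R_e[x] \cdot R_\sigma[x] \subseteq R_\sigma[x]$. The equation $f\cdot g = \sum_\sigma f\cdot g_\sigma = 0$ then lives inside the direct sum $\oplus_{\sigma} R_\sigma[x]$, which forces $f(x) g_\sigma(x) = 0$ for every $\sigma \in G$.

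Next, for each $\sigma \in supp(R,G)$ I would use the hypothesis $R_\sigma = R_e u_\sigma$ to factor out $u_\sigma$ from the coefficients of $g_\sigma$. Writing $g_\sigma(x) = \sum_i b_i x^i$ with $b_i = c_i u_\sigma$, $c_i \in R_e$, we get $g_\sigma(x) = h_\sigma(x)\, u_\sigma$ for some $h_\sigma(x) \in R_e[x]$. Then $f(x) h_\sigma(x) \in R_e[x]$ and $f(x) h_\sigma(x) \cdot u_\sigma = 0$ in $R[x]$. Comparing coefficients (of powers of $x$) and invoking the hypothesis $Ann_{R_e}(u_\sigma) = \{0\}$ coefficient by coefficient, I conclude $f(x) h_\sigma(x) = 0$ in $R_e[x]$.

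Since $f \in reg(R_e[x])$, this yields $h_\sigma(x) = 0$, hence $g_\sigma(x) = 0$ for all $\sigma \in supp(R,G)$; for $\sigma \notin supp(R,G)$, $g_\sigma = 0$ trivially. Therefore $g(x) = 0$, proving $f \in reg(R[x])$. The argument is almost entirely bookkeeping in the graded decomposition; the only point requiring slight care is the coefficient-wise cancellation by $u_\sigma$, which is where the regularity hypothesis $Ann_{R_e}(u_\sigma) = \{0\}$ enters, and this should be the main (but minor) obstacle.
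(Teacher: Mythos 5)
Your proof is correct. It differs from the paper's in one structural respect: the paper invokes (implicitly) McCoy's characterization of regular polynomials over a commutative ring, namely $h\in reg(R[x])$ if and only if $Ann_R(a_0,\dots,a_n)=\{0\}$, and so works with a single ring element $t\in Ann_R(a_0,\dots,a_n)$, decomposes $t=\sum_{\sigma}t_{\sigma}$ into homogeneous components, writes $t_{\sigma}=u_{\sigma}s_{t_{\sigma}}$ with $s_{t_{\sigma}}\in R_e$, and cancels using $Ann_{R_e}(u_{\sigma})=\{0\}$ and the regularity of $h$ over $R_e$. You instead decompose the entire annihilating polynomial $g=\sum_{\sigma}g_{\sigma}$ along the grading $R[x]=\oplus_{\sigma}R_{\sigma}[x]$, factor $g_{\sigma}=h_{\sigma}(x)u_{\sigma}$, and cancel $u_{\sigma}$ coefficientwise. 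The two arguments are parallel --- both hinge on exactly the same two hypotheses at the same moment --- but yours is self-contained in that it never needs the McCoy reduction from polynomial annihilators to element annihilators, at the cost of a little more bookkeeping; the paper's is shorter once that reduction is granted. All the individual steps you outline (directness of the sum forcing $f g_{\sigma}=0$, the factorization of the coefficients through $u_{\sigma}$, and the coefficientwise cancellation) check out.
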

\begin{proof}
Let $h(x)=\sum_{i=0}^na_ix^i\in reg(R_e[x])$. Then $Ann_{R_e}(a_0,a_1,\dots,a_n)=\{0\}$. Let $t=\sum_{\sigma\in G}t_g\in Ann_R(a_0,a_1,\dots,a_n)$. Then for each $i$, we have $0=\sum_{\sigma\in G}t_{\sigma}a_i$. Since $t_{\sigma}a_i\in R_{\sigma}$, by the unique representation of $0$, we get $t_{\sigma}\in Ann_R(a_0,a_1,\dots,a_n)$ for each ${\sigma}\in G$. On the other hand, we have that for every ${\sigma}\in G$ there exists $s_{t_{\sigma}}\in R_e$, such that $t_{\sigma}=u_{\sigma}s_{t_{\sigma}}$.  Since $u_{\sigma}s_{t_{\sigma}}a_i=t_{\sigma}a_i=0$ and $Ann_{R_e}(a_0,\dots,a_n)=\{0\}$, we obtain that $s_{t_{\sigma}}a_i=0$ for each $i$. So, $s_{t_{\sigma}}\in Ann_{R_e}(a_0,\dots,a_n)=\{0\}$, for all ${\sigma}\in G$. This implies that $t=0$. Hence $h(x)\in reg(R[x])$.
\end{proof}

\begin{defn}\label{d22} A grading $(R,G)$ is called crossed product over $supp(R,G)$ if for every $\sigma\in supp(R,G)$, $R_\sigma$ contains a unit.\end{defn}

\begin{prop}\label{p2} \cite[Proposition 1.7]{Da10} If $(R,G)$ is crossed product over $supp(R,G)$, then for each $\sigma\in supp(R,G)$, $R_{\sigma}=R_eu$ for some unit $u\in R_{\sigma}$
\end{prop}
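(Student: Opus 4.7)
The plan is to exhibit, for each $\sigma\in supp(R,G)$, a specific unit $u\in R_\sigma$ and verify that $R_\sigma=R_eu$. By hypothesis such a $u$ exists, so the task reduces to two observations: (i) the inverse $u^{-1}$ is homogeneous of degree $\sigma^{-1}$, and (ii) this forces $R_\sigma\subseteq R_eu$; the reverse inclusion will be immediate from the grading axiom.

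First I would pin down (i). Writing $u^{-1}=\sum_{\tau\in G}v_\tau$ in its unique homogeneous decomposition and expanding $uu^{-1}=1$, each summand $uv_\tau$ lies in $R_\sigma R_\tau\subseteq R_{\sigma\tau}$. Equating the homogeneous components of $1\in R_e$ against those of $\sum_\tau uv_\tau$ should yield $uv_{\sigma^{-1}}=1$ and $uv_\tau=0$ for every $\tau\neq\sigma^{-1}$. Multiplying the latter identities by $v_{\sigma^{-1}}$ and using commutativity ($v_{\sigma^{-1}}u=uv_{\sigma^{-1}}=1$) collapses the extra components to zero, so $u^{-1}=v_{\sigma^{-1}}\in R_{\sigma^{-1}}$.

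With (i) in hand, I would close the argument by a short chain of inclusions. For any $a\in R_\sigma$, the element $au^{-1}$ lies in $R_\sigma R_{\sigma^{-1}}\subseteq R_e$, so $a=(au^{-1})u\in R_eu$; conversely, $R_eu\subseteq R_eR_\sigma\subseteq R_{e\sigma}=R_\sigma$ directly from the definition of a $G$-grading. Together these give $R_\sigma=R_eu$.

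The main obstacle is (i): the homogeneity of $u^{-1}$ feels obvious but really depends on the uniqueness of the homogeneous decomposition together with the ability to cancel $v_{\sigma^{-1}}$ on the correct side, which is where commutativity (already standing in the paper's blanket hypothesis) earns its keep. Everything else is grading bookkeeping that I would keep brief.
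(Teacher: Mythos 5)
Your proof is correct. The paper itself gives no argument here—it imports the statement by citation from \cite[Proposition 1.7]{Da10}—so there is nothing internal to compare against; your write-up supplies the standard proof. The one step that genuinely needs care, the homogeneity of $u^{-1}$, you handle properly: since $\tau\mapsto\sigma\tau$ is a bijection of $G$, the terms $uv_\tau\in R_{\sigma\tau}$ are the distinct homogeneous components of $uu^{-1}=1$, forcing $uv_{\sigma^{-1}}=1$ and $uv_\tau=0$ otherwise, and multiplying by $v_{\sigma^{-1}}$ (or just invoking uniqueness of inverses in a commutative ring) kills the extra components. The two inclusions $R_\sigma\subseteq R_eu$ and $R_eu\subseteq R_\sigma$ then follow exactly as you say.
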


\begin{defn}\label{ae} Let $R$ be a ring with two gradations $(R,G)$ and $(R,G')$. Then $(R,G)$ is almost equivalent to $(R,G')$ if there exists an automorphism $\phi:R\longrightarrow R$ such that for each $\tau\in G'$ the exists $\sigma\in G$ such that $\phi(R_{\sigma})=R_{\tau}$.
\end{defn}

\begin{prop}\label{p3}\cite[Proposition 8.1.2]{Nastasescue} Let $R$ be a $G-$graded ring. If $S\subseteq h(R)$ is a multiplicatively closed subset of $R$, then the ring $S^{-1}R$ is $G-$graded by the gradation $(S^{-1}R)_{\lambda}=\{\frac{a}{s}\mid s\in S, a\in R\text{ such that } \lambda=(deg(s))^{-1}deg(a)\}$.
\end{prop}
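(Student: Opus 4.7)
The plan is to exhibit $\{(S^{-1}R)_\lambda\}_{\lambda\in G}$ as a direct-sum decomposition of $S^{-1}R$ into additive subgroups compatible with the group operation on $G$. I would organize the verification into four steps: (i) each $(S^{-1}R)_\lambda$ is an additive subgroup of $S^{-1}R$; (ii) every element of $S^{-1}R$ is a finite sum of elements drawn from the pieces $(S^{-1}R)_\lambda$; (iii) the resulting sum is direct; and (iv) the inclusion $(S^{-1}R)_\lambda(S^{-1}R)_\mu\subseteq(S^{-1}R)_{\lambda\mu}$ holds.

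Steps (i), (ii) and (iv) should reduce to bookkeeping with degrees. For (i), I would take $a/s,b/t\in(S^{-1}R)_\lambda$ with $a,b\in h(R)$ and $s,t\in S$ and note that $at+bs$ is homogeneous of degree $\lambda\cdot\deg(s)\deg(t)$, so $(at+bs)/(st)$ lies back in $(S^{-1}R)_\lambda$. For (ii), I would use the homogeneous decomposition $r=\sum_{\sigma\in G}r_\sigma$ inside $R$ to write $r/s=\sum_\sigma r_\sigma/s$, with each $r_\sigma/s$ belonging to $(S^{-1}R)_{(\deg s)^{-1}\sigma}$. Step (iv) is immediate from $(a/s)(b/t)=(ab)/(st)$, whose degree ratio is $(\deg(s)\deg(t))^{-1}\deg(a)\deg(b)=\lambda\mu$, where commutativity of $R$ is used to move factors past one another.

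The main obstacle I anticipate is step (iii), establishing directness of the sum. My approach would be to suppose $\sum_{i=1}^n a_i/s_i=0$ with $a_i/s_i\in(S^{-1}R)_{\lambda_i}$ and the $\lambda_i$ pairwise distinct, then clear denominators: by the definition of equality in the localization, there exists $u\in S$ such that $u\sum_i a_i\prod_{j\neq i}s_j=0$ inside $R$. Because the hypothesis $S\subseteq h(R)$ guarantees that $u$ and every $s_j$ are homogeneous, each summand $ua_i\prod_{j\neq i}s_j$ is itself homogeneous, and the distinctness of the $\lambda_i$ translates into these summands having pairwise distinct degrees in $R$. The uniqueness of homogeneous decomposition in the graded ring $R$ then forces each term to vanish individually, whence $a_i/s_i=0$ for every $i$. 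This is precisely the step where the assumption $S\subseteq h(R)$ is essential: without it, the degrees of the $s_j$ would not even be defined and the separation-by-degree argument would fail.
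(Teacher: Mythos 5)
The paper does not actually prove this proposition: it is quoted verbatim from Nastasescu--Van Oystaeyen \cite[Proposition 8.1.2]{Nastasescue} and used as a black box, so there is no in-paper argument to compare against. Your proof is the standard one and is essentially correct: the four-step plan (subgroup, spanning, directness, multiplicativity) is exactly how the cited result is verified, and you correctly identify directness as the only step with content, handling it by clearing denominators with a single $u\in S$ and invoking uniqueness of homogeneous decomposition in $R$. You are also right that $S\subseteq h(R)$ is precisely what makes each cleared summand $ua_i\prod_{j\neq i}s_j$ homogeneous. The one point worth tightening is your degree bookkeeping when $G$ is not abelian: expressions such as $\deg(s)\lambda\deg(t)$ versus $\deg(s)\deg(t)\lambda$ need not coincide a priori, so in steps (i) and (iii) the claim that the relevant summands have equal (respectively, pairwise distinct) degrees requires the observation that, because $R$ is commutative, $ab=ba\in R_{\sigma\tau}\cap R_{\tau\sigma}$ forces $\sigma\tau=\tau\sigma$ whenever the product is nonzero; you gesture at this only in step (iv). Since every example in the paper uses an abelian $G$, this is a minor gap, but it should be stated once at the outset if the proposition is meant for arbitrary $G$.
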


The following lemma can be found in \cite[Lemma 2.3]{an85}
\begin{lem}\label{l3}
If $f\in R[x]$ with $C(f)=Ra$ then there exists $g\in R[x]$ with $C(g)=R$ and $f(x)=ag(x)$
\end{lem}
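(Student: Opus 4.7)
The plan is to construct $g$ explicitly from the two halves of the hypothesis $C(f)=Ra$. Writing $f(x)=\sum_{i=0}^{n}a_ix^i$, the inclusion $C(f)\subseteq Ra$ lets me pick $r_0,\ldots,r_n\in R$ with $a_i=r_ia$, while the reverse inclusion $a\in C(f)$ lets me pick $s_0,\ldots,s_n\in R$ with $a=\sum_{i=0}^{n}s_ia_i$.

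Next I would set $e:=\sum_{i=0}^{n}s_ir_i$ and observe that
\[
ea=\sum_{i=0}^{n}s_ir_ia=\sum_{i=0}^{n}s_ia_i=a,
\]
so $(1-e)\in Ann_R(a)$. The naive candidate $g_0(x)=\sum_{i=0}^{n}r_ix^i$ already satisfies $ag_0(x)=f(x)$, but its content $(r_0,\ldots,r_n)$ only contains the element $e$, which need not be a unit, so $C(g_0)$ may fail to be all of $R$.

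To fix this I would enlarge the polynomial by one degree and define
\[
g(x)=\sum_{i=0}^{n}r_ix^i+(1-e)x^{n+1}.
\]
Since $a(1-e)=0$, the new monomial is killed by multiplication by $a$, so $ag(x)=ag_0(x)=f(x)$ is preserved. On the other hand, $C(g)=(r_0,\ldots,r_n,1-e)$, and this ideal contains $e+(1-e)=1$ (because $e=\sum s_ir_i\in(r_0,\ldots,r_n)$). Hence $C(g)=R$, as required.

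There is no serious obstacle in this argument; the only step that is not automatic is the idea of \emph{appending} a harmless coefficient $(1-e)x^{n+1}$ rather than trying to adjust the existing $r_i$'s by elements of $Ann_R(a)$. With that observation, the identity $1\in C(g)$ emerges for free from the decomposition $1=e+(1-e)$ while the factorisation $ag=f$ is maintained.
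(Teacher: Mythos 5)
Your argument is correct. Note that the paper itself gives no proof of this lemma; it is quoted verbatim from \cite[Lemma 2.3]{an85}, so there is nothing internal to compare against. Your construction --- lifting the coefficients as $a_i=r_ia$, using $a\in C(f)$ to produce $e=\sum_i s_ir_i$ with $ea=a$, and then appending the harmless term $(1-e)x^{n+1}$ so that $1=e+(1-e)\in C(g)$ while $ag=f$ is preserved --- is exactly the standard proof of this fact, and it handles the degenerate case $f=0$ (where $a=0$) without modification.
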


\begin{defn}\label{d2} Let $H$ be a subset (res. subring, ideal) of a ring $R$. Then $H$ is called $EM-$subset (res. $EM-$subring, $EM-$ideal) of $R$ if for each $f(x)\in H[x]\cap Z(R[x])$, there exists $c_f\in Z(R)$ and $g(x)\in reg(R[x])$ such that $f(x)=c_fg(x)$.
\end{defn}

\section{$EM-G-$graded rings}\label{sec1}

We start this section by introducing the concept of $EM-G-$graded ring.

\begin{defn}\label{d1} Let $R$ be a $G-$graded ring and consider the $G-$grading on $R[x]$ whose components are $R([x])_{\sigma}=R_{\sigma}[x]$, $\sigma\in G$. Then $R$ is called $EM-G-$graded ring if every non-zero polynomial in $hZ(R[x])$ has an annihilating content.
\end{defn}

We can see that every $EM-$ring that is $G-$graded is an $EM-G-$graded ring. The next example shows that the converse is not true.

\begin{exa}\label{e1} The ring $R=\mathbb{Z}_4[y]/(y^2)=\{a+bY: a,b\in \mathbb{Z}_4\text{ and } Y^2=0\}$ is not an $EM-$ring beacuse the polynomial $f(x)=2+Yx\in Z(R[x])$ does not have annihilating content (see \cite[Example 2.6]{ab19}). Now $R$ is $\mathbb{Z}_2-$graded by $R_0=\mathbb{Z}_4$, $R_1=\mathbb{Z}_4Y$. Let $f(x)=a_0+a_1x+\dots+a_nx^n\in hZ(R[x])$. Then either $\{a_0,\dots,a_n\}\subseteq \{0,2\}$ or  $\{a_0,\dots,a_n\}\subseteq \mathbb{Z}_4Y$. If $\{a_0,\dots,a_n\}\subseteq \{0,2\}$, then $a_n=2$, and hence $2$ is an annihilating content of $f$. Assume $\{a_0,\dots,a_n\}\subseteq \mathbb{Z}_4Y$. Then there is a polynomial $f_1(x)$ whose one of the coefficients is $1$ or $3$ such that $f(x)=Yf_1(x)$ or $f(x)=2Yf_1(x)$. Hence either $Y$ or $2Y$ is an annihilating content of $f$. Therefore $R$ is an $EM-\mathbb{Z}_2-$graded ring.
\end{exa}

\begin{thm}\label{t1} Let $R$ be $G-$graded ring. Then $R$ is an $EM-G-$graded ring if and only if $R_{\sigma}$ is an $EM-$subset of $R$ for all $\sigma\in G$.
\end{thm}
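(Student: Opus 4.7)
The plan is to prove the equivalence by unpacking the definitions on each side. The bridge between the two conditions is the observation already noted in the paper: a polynomial $f(x)=\sum_{i=0}^{n}a_ix^i\in R[x]$ is homogeneous (with respect to the gradation $(R[x])_{\sigma}=R_{\sigma}[x]$) if and only if there is some $\sigma\in G$ with $a_i\in R_{\sigma}$ for all $i$, equivalently $f\in R_{\sigma}[x]$. Therefore
\[
hZ(R[x])\setminus\{0\}=\bigcup_{\sigma\in G}\bigl(R_{\sigma}[x]\cap Z(R[x])\bigr)\setminus\{0\}.
\]
Once this is recorded, both implications become matters of rewriting.

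For the forward direction, I would fix $\sigma\in G$ and pick $f(x)\in R_{\sigma}[x]\cap Z(R[x])$. Since every coefficient of $f$ lies in the single component $R_{\sigma}$, $f$ is homogeneous, so $f\in hZ(R[x])$. Assuming $f\neq 0$ (the empty case is trivial, and if $\sigma\notin \mathrm{supp}(R,G)$ the set in question is $\{0\}$ and there is nothing to check), the $EM$-$G$-graded hypothesis gives $c_f\in Z(R)$ and $f_1\in reg(R[x])$ with $f=c_ff_1$. That is exactly the $EM$-subset condition for $R_{\sigma}$.

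For the converse, I would take a nonzero $f\in hZ(R[x])$. By homogeneity there exists $\sigma\in G$ with $f\in R_{\sigma}[x]$, and by assumption $f\in Z(R[x])$, so $f\in R_{\sigma}[x]\cap Z(R[x])$. Applying the $EM$-subset hypothesis for this particular $\sigma$ produces the required annihilating content.

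The only potential subtlety — and it really is minor — is to make sure the two definitions agree on the shape of the decomposition: the $EM$-subset definition (Definition 2.8) asks for $c_f\in Z(R)$ and $g(x)\in reg(R[x])$, which is exactly the notion of annihilating content used in Definition 3.1. So no technical obstacle arises, and the proof is essentially a direct translation between the two formulations with no appeal to Lemma 2.2 or Lemma 2.3; those tools will become relevant only in subsequent results that use this theorem.
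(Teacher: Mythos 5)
Your proposal is correct and matches the paper's argument exactly: the paper's entire proof is the one-line observation that $hZ(R[x])=\bigcup_{\sigma\in G}\bigl(R_{\sigma}[x]\cap Z(R[x])\bigr)$, from which both directions are definition-unpacking, precisely as you carry out. You have simply written out the details the paper calls ``straightforward.''
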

\begin{proof}Straightforward because $hZ(R[x])=\cup_{g\in G}(R_g[x]\cap Z(R[x]))$. \end{proof}

\begin{cor}\label{c1}
If $R$ is an $EM-G-$graded ring then $R_e$ is an $EM-$subring of $R$.
\end{cor}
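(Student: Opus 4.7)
The plan is simply to specialise Theorem \ref{t1} to the identity element of $G$. Since $R$ is assumed to be an $EM$-$G$-graded ring, Theorem \ref{t1} immediately yields that $R_\sigma$ is an $EM$-subset of $R$ for every $\sigma\in G$; in particular, taking $\sigma=e$ gives that $R_e$ is an $EM$-subset of $R$.

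To upgrade this from an $EM$-subset statement to an $EM$-subring statement, I only need the standard structural fact recalled in Section \ref{sec0}, namely that $R_e$ is a subring of $R$ (with $1\in R_e$). Inspecting Definition \ref{d2}, the defining condition for $EM$-subset, $EM$-subring and $EM$-ideal is literally the same polynomial-factorisation condition; what distinguishes them is only the algebraic structure one additionally requires $H$ to carry. Thus once $H=R_e$ is known to be a subring, the $EM$-subset property is, by definition, the $EM$-subring property.

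There is essentially no obstacle here; the substantive content lives in Theorem \ref{t1}, and the corollary is an immediate instantiation. The only point worth being a little careful about is to note explicitly that the zero-divisor coefficient $c_f\in Z(R)$ and the regular factor $g(x)\in \mathrm{reg}(R[x])$ are allowed by Definition \ref{d2} to lie in the ambient ring $R$ (respectively $R[x]$), not in $R_e$ (respectively $R_e[x]$), so no further argument, such as Lemma \ref{l1}, is needed to conclude.
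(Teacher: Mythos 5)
Your proposal is correct and follows exactly the route the paper intends: the corollary is an immediate specialisation of Theorem \ref{t1} to $\sigma=e$, combined with the standard fact that $R_e$ is a subring containing $1$, so that the $EM$-subset condition of Definition \ref{d2} becomes the $EM$-subring condition. Your added remark that $c_f$ and $g(x)$ are permitted to live in $R$ and $R[x]$ rather than in $R_e$ and $R_e[x]$ is a worthwhile clarification but changes nothing of substance.
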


\begin{thm}\label{t2} Let $R$ be a $G-$graded ring such that $R_e$ is an $EM$-ring. If for every $\sigma\in supp(R,G)$, there exists $u_\sigma\in R_\sigma$ such that $R_{\sigma}=R_eu_{\sigma}$ and $Ann_{R_e}(u_{\sigma})=\{0\}$, then $R$ is an $EM-G-$graded ring.
\end{thm}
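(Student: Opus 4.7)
The plan is to invoke Theorem~\ref{t1} and show that each component $R_\sigma$ is an $EM$-subset of $R$. For $\sigma\notin supp(R,G)$ this is vacuous since $R_\sigma=\{0\}$, so I would fix $\sigma\in supp(R,G)$ and a nonzero $f(x)=\sum_{i=0}^n a_i x^i\in R_\sigma[x]\cap Z(R[x])$. Using the hypothesis, each coefficient factors as $a_i=b_iu_\sigma$ with $b_i\in R_e$, so writing $h(x)=\sum_{i=0}^n b_ix^i\in R_e[x]$ gives the clean factorization $f(x)=u_\sigma\,h(x)$ that ``pushes'' the problem down into the $EM$-ring $R_e[x]$.

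Next I would split into two cases according to whether $h$ is a zero divisor or a regular element of $R_e[x]$. If $h\in reg(R_e[x])$, Lemma~\ref{l1} upgrades this to $h\in reg(R[x])$; then choosing $g\ne 0$ in $R[x]$ with $fg=0$ and using commutativity gives $h(u_\sigma g)=u_\sigma hg=fg=0$, so regularity of $h$ forces $u_\sigma g=0$ and thus $u_\sigma\in Z(R)$. The factorization $f(x)=u_\sigma\cdot h(x)$ is then the desired annihilating content decomposition. If instead $h\in Z(R_e[x])$, then since $R_e$ is an $EM$-ring we can write $h(x)=c_h\,h_1(x)$ with $c_h\in Z(R_e)$ and $h_1\in reg(R_e[x])$, giving $f(x)=(c_hu_\sigma)\,h_1(x)$; Lemma~\ref{l1} again ensures $h_1\in reg(R[x])$, while $c_hu_\sigma\in Z(R)$ follows because any nonzero $r\in R_e$ annihilating $c_h$ also annihilates $c_hu_\sigma$.

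The main obstacle I anticipate is ensuring that the candidate content produced in each case really is a nonzero element of $Z(R)$ rather than accidentally $0$ or regular. In the regular-$h$ case, one must carefully use the regularity of $h$ in the larger ring $R[x]$ (which is exactly what Lemma~\ref{l1} is designed to deliver) to transfer the existence of a nonzero annihilator of $f$ to a nonzero annihilator of $u_\sigma$. In the zero-divisor case, the injectivity of multiplication by $u_\sigma$ on $R_e$ (supplied by $Ann_{R_e}(u_\sigma)=\{0\}$) is what prevents $c_hu_\sigma$ from collapsing to $0$. Both verifications hinge on the two hypotheses $R_\sigma=R_eu_\sigma$ and $Ann_{R_e}(u_\sigma)=\{0\}$ acting in tandem, so the proof essentially amounts to organizing these observations cleanly rather than any deep new computation.
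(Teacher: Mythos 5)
Your proposal is correct and follows essentially the same route as the paper's proof: factor $f(x)=u_\sigma h(x)$ with $h\in R_e[x]$, split on whether $h$ is regular or a zero divisor in $R_e[x]$, and use Lemma~\ref{l1} to promote regularity from $R_e[x]$ to $R[x]$. Your extra verifications that $u_\sigma\in Z(R)$ in the first case and that $c_hu_\sigma$ is a nonzero element of $Z(R)$ in the second are details the paper leaves implicit, and they are argued correctly.
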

\begin{proof}
Let $f(x)\in hZ(R[x])$. Then these exists $\sigma\in G$ such that $a_i\in R_{\sigma}$ for all $i$. Therefore $f(x)=u_{\sigma}h(x)$ for some $h(x)\in R_e[x]$. If $h(x)\in reg(R_e[x])$, then by Lemma \ref{l1}, $h(x)\in reg(R[x])$, and hence $u_{\sigma}$ is an annihilating content of $f$. Suppose $h(x)\in Z(R_e[x])$. Then there exist $c\in Z(R_e)$ and $h_1(x)\in reg(R_e[x])$ such that $h(x)=ch_1(x)$. So we have $f(x)=u_{\sigma}ch_1(x)$ where $u_{\sigma}c\in Z(R)$ and $h_1(x)\in reg(R[x])$. Hence $u_{\sigma}c$ is an annihilating content of $f(x)$.
\end{proof}

\begin{exa}\label{e2} The ring $R=\mathbb{Z}_6[x,y]/(xy)$ is not an $EM-$ring (see \cite[Example 3.11]{ab19}). Now $R$ is $\mathbb{Z}\times \mathbb{Z}-$graded by $R_{(0,0)}=\mathbb{Z}_6$, $R_{(i,0)}=\mathbb{Z}_6x^i$, $R_{(0,i)}=\mathbb{Z}_6y^i$ ($i\geq 1$), and $R_{(i,j)}=0$ otherwise. For each i,  $R_{(i,0)}=R_{(0,0)}x^i$, $R_{(0,i)}=R_{(0,0)}y^i$, and $Ann_{R_{(0,0)}}(x^i)=Ann_{R_{(0,0)}}(y^i)=\{0\}$. So by Theorem \ref{t2}, we get that $R$ is an $EM-\mathbb{Z}\times \mathbb{Z}-$graded ring.
\end{exa}

Let $R$ be any commutative ring (need not to be $G-$graded) and $n\geq2$. Then the ring $H=R[x]/(x^n)$ is $\mathbb{Z}_n-$graded by the gradation $H_k=Rx^k$, $k\in \mathbb{Z}_n$. For each $k\in \mathbb{Z}_n$, we have $H_k=Rx^k=H_0x^k$, and $Ann_R(x^k)=\{0\}$. So by Theorem \ref{t2}, we obtain following result

\begin{cor}\label{c6} Let $R$ be an $EM-$ring and $n\geq2$ be an integer. Then $H=R[x]/(x^n)$ is $EM-\mathbb{Z}_n-$graded by the gradation $H_k=Rx^k.$
\end{cor}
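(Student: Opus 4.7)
The plan is to verify the hypotheses of Theorem \ref{t2} and then apply it directly, since the surrounding discussion has essentially already laid out the gradation. First I would confirm that $H = R[x]/(x^n)$ admits the $\mathbb{Z}_n$-grading with components $H_k = Rx^k$ for $k \in \{0,1,\dots,n-1\}$. The direct sum decomposition $H = \bigoplus_{k=0}^{n-1} Rx^k$ is immediate from the division algorithm in $R[x]$ modulo $x^n$, and the multiplication rule $H_i H_j \subseteq H_{i+j \bmod n}$ follows from $x^i \cdot x^j = x^{i+j}$ (which equals $0$ in $H$ if $i+j \geq n$, so the containment is trivial in that case). The identity component is $H_0 = R$, which by hypothesis is an $EM$-ring.

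Next I would choose the distinguished elements required by Theorem \ref{t2}. For each $k \in \mathbb{Z}_n$ (restricting to $\mathrm{supp}(H, \mathbb{Z}_n)$, which is contained in $\{0,1,\dots,n-1\}$), set $u_k = x^k \in H_k$. Then $H_k = Rx^k = H_0 u_k$ by the very definition of the gradation. The only verification that requires a small argument is the annihilator condition $\mathrm{Ann}_{H_0}(u_k) = \{0\}$: if $r \in R$ satisfies $r x^k = 0$ in $H$, then $r x^k \in (x^n) \subseteq R[x]$, and comparing degrees in $R[x]$ (for $0 \le k < n$) forces $r = 0$. Hence $\mathrm{Ann}_R(x^k) = \{0\}$ as claimed.

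With $H_0 = R$ an $EM$-ring and the family $\{u_k = x^k\}_{k \in \mathrm{supp}(H,\mathbb{Z}_n)}$ satisfying $H_k = H_0 u_k$ and $\mathrm{Ann}_{H_0}(u_k) = \{0\}$, Theorem \ref{t2} applies verbatim and yields that $H$ is an $EM$-$\mathbb{Z}_n$-graded ring. There is no genuine obstacle here; the entire content of the corollary is recognizing that the polynomial structure of $H = R[x]/(x^n)$ produces natural unit-like elements $x^k$ in each homogeneous component whose annihilators in $R$ are trivial, which is precisely the hypothesis of Theorem \ref{t2}.
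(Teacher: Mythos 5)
Your proposal is correct and follows exactly the paper's route: the paper likewise observes that $H_k = Rx^k = H_0 x^k$ with $\mathrm{Ann}_R(x^k) = \{0\}$ and then invokes Theorem \ref{t2}. You simply spell out the grading verification and the degree-comparison argument for the annihilator condition in slightly more detail than the paper does.
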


Let $R$ be a ring and $G$ be a group. Then the group ring $R[G]$ is $EM-G-$graded ring with gradation  $H_{\sigma}=R\sigma$, $\sigma\in G$. The following corollary also follows directly from Theorem \ref{t2}.

\begin{cor}\label{c66} Let $R$ be an $EM-$ring and $G$ be a group. Then $R[G]$ is $EM-G-$graded ring with gradation  $H_{\sigma}=R\sigma$, $\sigma\in G$.
\end{cor}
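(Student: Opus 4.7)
The plan is to apply Theorem \ref{t2} directly, with identity component $H_e = Re \cong R$ playing the role of the $EM$-ring and the group elements $\sigma$ themselves playing the role of the distinguished units $u_\sigma$. So first I would verify that the decomposition $R[G] = \oplus_{\sigma \in G} R\sigma$ genuinely defines a $G$-grading: as a free $R$-module this direct sum is built into the definition of the group ring, and since the scalars in $R$ commute with every group element, $(r\sigma)(s\tau) = (rs)(\sigma\tau) \in R(\sigma\tau)$, which gives $H_\sigma H_\tau \subseteq H_{\sigma\tau}$.

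Next I would check the hypothesis of Theorem \ref{t2}. For each $\sigma \in \mathrm{supp}(R[G], G)$, set $u_\sigma := \sigma \in H_\sigma$. Then by construction $H_\sigma = R\sigma = H_e u_\sigma$. For the annihilator condition, suppose $r \in H_e = R$ satisfies $r \cdot \sigma = 0$ in $R[G]$; multiplying on the right by $\sigma^{-1}$ forces $r = 0$, so $\mathrm{Ann}_{H_e}(u_\sigma) = \{0\}$. (Equivalently, each $\sigma$ is a unit of $R[G]$ with inverse $\sigma^{-1}$, so it is certainly a non-zerodivisor.) Together with the hypothesis that $H_e \cong R$ is $EM$, this is precisely what Theorem \ref{t2} needs, and the conclusion follows.

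There is essentially no obstacle here: the only substantive observation is that every group element is a unit in $R[G]$, which instantly delivers the trivial-annihilator condition. In fact the grading $H_\sigma = R\sigma$ is a crossed product in the sense of Definition \ref{d22}, so one could alternatively combine Proposition \ref{p2} with Theorem \ref{t2}, but invoking Theorem \ref{t2} directly with the explicit choice $u_\sigma = \sigma$ is the shortest route.
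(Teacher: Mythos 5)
Your proposal is correct and matches the paper exactly: the paper derives Corollary \ref{c66} directly from Theorem \ref{t2} by observing that $H_e\cong R$ is an $EM$-ring, $H_\sigma=H_e\sigma$, and each $\sigma$ (being a unit of $R[G]$) has trivial annihilator in $H_e$. Your explicit verification of these hypotheses, including the crossed-product alternative, is the same argument spelled out in slightly more detail.
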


\begin{cor}\label{c2}
Let $R$ be a $G-$graded ring. If $(R,G)$ is crossed product over $supp(R,G)$ and $R_e$ is an $EM-$ring then $R$ is an $EM-G-$graded ring.
\end{cor}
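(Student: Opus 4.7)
The plan is to derive this corollary as a direct application of Theorem \ref{t2}, using Proposition \ref{p2} to verify that the hypotheses of that theorem are automatically satisfied in the crossed product case. So the main work is really just unpacking definitions and noting that a unit has trivial annihilator.

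First, I would invoke Proposition \ref{p2}: since $(R,G)$ is a crossed product over $supp(R,G)$, for each $\sigma \in supp(R,G)$ there exists a unit $u_\sigma \in R_\sigma$ such that $R_\sigma = R_e u_\sigma$. This gives one of the two structural conditions needed to apply Theorem \ref{t2}.

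Next, I would verify the annihilator condition $Ann_{R_e}(u_\sigma) = \{0\}$. Suppose $a \in R_e$ satisfies $a u_\sigma = 0$. Since $u_\sigma$ is a unit in $R$, multiplying on the right by $u_\sigma^{-1}$ yields $a = 0$. Hence $Ann_{R_e}(u_\sigma) = \{0\}$ for every $\sigma \in supp(R,G)$.

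Finally, since $R_e$ is an $EM-$ring by assumption, and both conditions of Theorem \ref{t2} are now verified, the conclusion follows immediately: $R$ is an $EM-G-$graded ring. The only conceivable obstacle is bookkeeping about $\sigma \notin supp(R,G)$, but these components are zero and contribute no nonzero homogeneous polynomials to $hZ(R[x])$, so they are vacuous in Theorem \ref{t2}. Thus the proof is short and essentially a one-line reduction.
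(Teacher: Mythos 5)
Your proposal is correct and follows exactly the paper's route: the paper also derives the corollary as an immediate consequence of Proposition \ref{p2} combined with Theorem \ref{t2}. Your explicit verification that a unit has trivial annihilator in $R_e$ is a detail the paper leaves implicit, but the argument is the same.
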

\begin{proof} The result follows directly from Proposition \ref{p2} and Theorem \ref{t2}. \end{proof}

\begin{thm} Let $R$ be a ring with two gradations $(R,G)$ and $(R,G')$ such that $(R,G)$ is almost equivalent to $(R,G')$. If $R$ is $EM-G-$graded ring then $R$ is $EM-G'-$graded ring.
\end{thm}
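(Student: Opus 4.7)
The plan is to transport the annihilating-content decomposition from the $G$-grading to the $G'$-grading through the automorphism $\phi$. The key observation is that any ring automorphism of $R$ extends coefficient-wise to a ring automorphism of $R[x]$, and such an automorphism preserves the sets $Z(R)$, $Z(R[x])$, and $\mathrm{reg}(R[x])$.

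First, I would start with an arbitrary nonzero $f(x)=\sum_{i=0}^{n}b_i x^i \in hZ(R[x])$ with respect to the $G'$-grading, so that there exists $\tau\in G'$ with $b_i \in R'_\tau$ for all $i$, where I write $R'_\tau$ for the $G'$-components. By almost equivalence, pick $\sigma\in G$ with $\phi(R_\sigma)=R'_\tau$, and let $a_i=\phi^{-1}(b_i)\in R_\sigma$. Then $\tilde f(x)=\sum_{i=0}^{n}a_i x^i$ is homogeneous of degree $\sigma$ in the $G$-grading. Because $\phi^{-1}$ (extended to polynomials) is a ring automorphism sending $f$ to $\tilde f$, and automorphisms map zero divisors to zero divisors, we have $\tilde f \in hZ(R[x])$ with respect to $(R,G)$.

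Next, apply the $EM$-$G$-graded hypothesis to $\tilde f$: there exist $c\in Z(R)$ and $g(x)\in \mathrm{reg}(R[x])$ with $\tilde f(x)=c\,g(x)$. Applying the automorphism $\phi$ coefficient-wise to this identity yields
\[
f(x)=\phi(\tilde f(x))=\phi(c)\,\phi(g)(x).
\]
Since $\phi$ is an automorphism of $R$, $\phi(c)\in Z(R)$; and since the extended automorphism preserves $\mathrm{reg}(R[x])$ (any annihilator of $\phi(g)$ pulls back via $\phi^{-1}$ to an annihilator of $g$), we have $\phi(g)\in \mathrm{reg}(R[x])$. Hence $\phi(c)$ is an annihilating content of $f$, proving that $R$ is $EM$-$G'$-graded.

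The only mildly delicate point is verifying that the automorphism $\phi$, extended coefficient-wise, really does respect regularity and zero-divisorhood of polynomials; but this is immediate from the fact that the extension is itself a ring automorphism of $R[x]$, so I do not expect any genuine obstacle in carrying out the argument.
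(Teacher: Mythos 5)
Your proposal is correct and follows essentially the same route as the paper's proof: pull $f$ back through $\phi^{-1}$ to a homogeneous zero divisor in the $(R,G)$-grading, apply the $EM$-$G$-graded hypothesis, and push the factorization forward through $\phi$. You simply make explicit the details the paper leaves implicit, namely that almost equivalence guarantees $\phi^{-1}$ of a $G'$-homogeneous polynomial is $G$-homogeneous and that the coefficient-wise extension of $\phi$ preserves $Z(R)$ and $\mathrm{reg}(R[x])$.
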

\begin{proof}
Let $f(x)=\sum_{i=0}^{n}a_ix^i$ be a homogenous zero divisor of $R[x]$ under the gradation $(R,G')$. So $\sum_{i=0}^{n}\phi^{-1}(a_i)x^i$ a homogenous zero divisor of $R[x]$ under the gradation $(R,G)$. Hence there exists $c\in Z(R)$ and $g(x)\in reg(R[x])$ such that $\sum_{i=0}^{n}\phi^{-1}(a_i)x^i=cg(x)$. This implies that $\phi(c)$ is an annihilating content of $f$.
\end{proof} .

A ring $R$ is called Bezout ring if every finitely generated ideal is principal. $R$ is called Armendariz if the product of two polynomials on $R[x]$ is zero if and only if the product of their coefficients is zero. Abuosba and Ghanem \cite{ab19} showed that every Bezout ring is $EM-$ring and every $EM-$ring is Aremndariz. We extend these results into the notion of graded rings. We shall call a $G-$graded ring $R$, Bezout$-G-$graded if each finitely generated $G-$graded ideal is principal. In addition, we will say $R$ is  Armendariz$-G-$graded ring if the product of two polynomials in $hZ(R[x])$ is zero if and only if the product of their coefficients is zero.

\begin{cor}\label{c3} Every Bezout$-G-$graded ring is an $EM-G-$graded ring.
\end{cor}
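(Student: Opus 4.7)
The plan is to show directly that every homogeneous zero-divisor polynomial in $R[x]$ has an annihilating content, without appealing to Theorem \ref{t1} in any essential way. So I would start with an arbitrary $f(x)=\sum_{i=0}^{n}a_ix^i \in hZ(R[x])$, so that all the $a_i$ lie in a single homogeneous component $R_\tau$. The coefficient ideal $C(f)=\sum_{i=0}^{n}Ra_i$ is finitely generated, and by Lemma \ref{l2} it is a $G$-graded ideal of $R$. Since $R$ is Bezout-$G$-graded, every finitely generated $G$-graded ideal is principal, so I can write $C(f)=Ra$ for some $a\in R$ (I do not need to take $a$ homogeneous).

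Next, I would invoke Lemma \ref{l3}: from $C(f)=Ra$ we obtain $g(x)\in R[x]$ with $C(g)=R$ and $f(x)=a\,g(x)$. I claim $g\in reg(R[x])$. By McCoy's classical theorem, $g$ is a zero divisor in $R[x]$ iff there is some nonzero $r\in R$ with $rg=0$; but $rg=0$ forces $r$ to annihilate every coefficient of $g$, hence $r\cdot C(g)=rR=0$, giving $r=0$. Thus $g$ is regular. It remains to show $a\in Z(R)$. Since $f\in Z(R[x])$, McCoy's theorem produces a nonzero $c\in R$ with $cf=0$; then $c$ annihilates each $a_i$, so $c\cdot C(f)=cRa=0$, and in particular $ca=0$. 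Hence $a\in Z(R)$ and $f=a\,g$ is an annihilating-content decomposition, proving that $R$ is $EM$-$G$-graded.

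I do not expect a serious obstacle here: the argument is essentially the same as the ungraded Bezout $\Rightarrow$ $EM$ implication from \cite{ab19}, and the only place the grading enters is in guaranteeing that $C(f)$ is a graded ideal so that the Bezout-$G$-graded hypothesis applies, which is precisely what Lemma \ref{l2} supplies. The one subtle point worth flagging is that a principal generator of a graded ideal need not itself be homogeneous, so I should be careful to formulate the argument (as above) in a way that does not require $a$ to be homogeneous; fortunately the McCoy-type reasoning only uses $a\in C(f)$ and $C(g)=R$.
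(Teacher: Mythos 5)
Your proof is correct and follows essentially the same route as the paper, whose entire proof is the remark that the result follows from Lemma \ref{l2} and Lemma \ref{l3}. You have simply filled in the details the paper leaves implicit (the McCoy-type verification that $g$ is regular and that $a\in Z(R)$, and the observation that the principal generator need not be homogeneous), all of which are sound.
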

\begin{proof} The result follows directly from Lemma \ref{l2} and Lemma \ref{l3}.
\end{proof}

\begin{thm}\label{t3} Every $EM-G-$graded ring is Aremndariz$-G-$graded.
\end{thm}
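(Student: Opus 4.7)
The "if" direction of the biconditional is immediate (if every product $a_ib_j$ of coefficients vanishes, then certainly $fg=0$), so the content is the "only if" direction. The plan is to take two nonzero polynomials $f,g\in hZ(R[x])$ with $fg=0$ and deduce that every product of a coefficient of $f$ with a coefficient of $g$ is zero, by reducing the question to the scalar level using annihilating contents.

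First, I would invoke the $EM$-$G$-graded hypothesis on both $f$ and $g$: write $f(x)=c_f f_1(x)$ and $g(x)=c_g g_1(x)$ with $c_f,c_g\in Z(R)$ and $f_1,g_1\in reg(R[x])$. Multiplying, one gets $0 = fg = c_f c_g \, f_1(x)g_1(x)$. Since $R[x]$ is commutative and the product of two regular elements in a commutative ring is regular, $f_1 g_1\in reg(R[x])$, so this forces the scalar equation $c_f c_g = 0$.

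Next, I would use the two key properties of annihilating contents recalled in Section~\ref{sec0} just before Lemma~\ref{l2}: $Ann_R(C(f))=Ann_R(c_f)$ and $Ann_R(C(g))=Ann_R(c_g)$. The equation $c_f c_g=0$ says $c_g\in Ann_R(c_f)=Ann_R(C(f))$, hence $c_g a_i=0$ for every coefficient $a_i$ of $f$. This in turn says $a_i\in Ann_R(c_g)=Ann_R(C(g))$, and therefore $a_i b_j=0$ for every coefficient $b_j$ of $g$, which is exactly what is needed.

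I do not expect any real obstacle here: the proof is a clean $EM$-style reduction identical in flavor to the ungraded analogue of Anderson--Camillo / Abuosba--Ghanem, and it uses only (i) existence of annihilating contents (the $EM$-$G$-graded hypothesis), (ii) that $reg(R[x])$ is multiplicatively closed, and (iii) the annihilator identity $Ann_R(C(f))=Ann_R(c_f)$ already cited in the paper. The only minor point to keep straight is that $f$ and $g$ are required to lie in $hZ(R[x])$ so that the $EM$-$G$-graded assumption actually applies, but any nonzero $f,g\in h(R[x])$ with $fg=0$ are automatically homogeneous zero divisors, so this is not a restriction in practice.
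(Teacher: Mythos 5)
Your proposal is correct and follows essentially the same route as the paper: apply the $EM$-$G$-graded hypothesis to both $f$ and $g$, multiply, and use that the product of regular polynomials is regular to force $c_fc_g=0$. The only difference is cosmetic at the last step -- the paper concludes directly via $a_ib_j=c_fr_i\,c_gs_j=(c_fc_g)r_is_j=0$ by commutativity, whereas you route through the annihilator identities $Ann_R(C(f))=Ann_R(c_f)$; both finish the argument.
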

\begin{proof}Suppose $R$ is an $EM-G-$graded ring. Let $f(x)=\sum_{i=0}^na_ix^i$ and $g(x)=\sum_{i=0}^mb_ix^i$ be two polynomials in $h(R[x])$ such that $f(x)g(x)=0$. Since $R$ is an $EM-G-$graded, we get $f(x)=c_f\sum_{i=0}^{n_1}r_ix_i$ and $g(x)=c_g\sum_{j=0}^{m_1}s_jx^j$ where $c_fc_g=0$. Hence, for each $i,j$, we have $a_ib_j=c_fr_ic_gs_j=0$.
\end{proof}

\begin{thm}\label{t4} Let $R$ be an $EM-G-$graded ring and $S\subseteq h(R)$ be a multiplicatively closed subset of $R$. Then $S^{-1}R$ is $EM-G-$graded ring by the gradation $(S^{-1}R)_{\lambda}=\{\frac{a}{s}\mid s\in S, a\in R\text{ such that } \lambda=(deg(s))^{-1}deg(a)\}$.
\end{thm}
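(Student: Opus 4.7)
The plan is to lift a homogeneous zero divisor in $S^{-1}R[x]$ to one in $R[x]$ by clearing denominators, apply the $EM$-$G$-graded hypothesis on $R$, and then descend the resulting factorization back to $S^{-1}R[x]$.

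First, given a nonzero $F(x)\in hZ(S^{-1}R[x])$ of degree $\lambda\in G$, I use the description of the gradation in Proposition \ref{p3} to write each coefficient as $a_i/s_i$ with $a_i\in h(R)$, $s_i\in S$, and $\deg(a_i)=\deg(s_i)\lambda$. Setting $s=s_0s_1\cdots s_n\in S$ and $f(x)=\sum_{i=0}^n a_i\bigl(\prod_{j\neq i}s_j\bigr)x^i\in R[x]$, we then have $F(x)=f(x)/s$, and a direct degree computation (using that degrees of homogeneous elements with nonzero product commute in the commutative graded ring $R$) shows each nonzero coefficient of $f$ has degree $\lambda\cdot\deg(s)$, so $f\in h(R[x])$.

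Second, I transfer the zero-divisor property from $F$ to $f$. If $H(x)\in S^{-1}R[x]\setminus\{0\}$ satisfies $F(x)H(x)=0$, write $H=h/t$ and clear denominators to get $uf(x)h(x)=0$ in $R[x]$ for some $u\in S$; then $uh\neq 0$ in $R[x]$ (otherwise $H=0$ in $S^{-1}R[x]$) and $f\neq 0$ (otherwise $F=0$), so $f\in hZ(R[x])$ is nonzero. By hypothesis there exist $c\in Z(R)$ and $f_1(x)\in reg(R[x])$ with $f(x)=cf_1(x)$, which yields $F(x)=(c/s)f_1(x)$.

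Finally, I check that $c/s$ is an annihilating content in $S^{-1}R[x]$. Regularity of $f_1$ passes from $R[x]$ to $S^{-1}R[x]$ by a standard clearing-denominators argument: any relation $f_1(x)\cdot(\alpha(x)/u)=0$ in $S^{-1}R[x]$ gives $vf_1(x)\alpha(x)=0$ in $R[x]$ for some $v\in S$, and regularity of $f_1$ over $R$ forces $v\alpha(x)=0$, i.e.\ $\alpha(x)/u=0$. Combining this with $F(x)H(x)=0$ produces $(c/s)H(x)=0$ in $S^{-1}R[x]$, and since $H$ has a nonzero coefficient in $S^{-1}R$ we conclude $c/s\in Z(S^{-1}R)$. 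The main obstacle is the first step: one must select homogeneous numerators $a_i$ and track the cross-products of the $s_j$'s through the common-denominator computation so that $f$ genuinely lands in $h(R[x])$. The remaining transfers between $R$ and $S^{-1}R$ are standard localization bookkeeping.
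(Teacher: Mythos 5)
Your proposal is correct and follows essentially the same route as the paper: clear denominators to lift the homogeneous zero divisor to $hZ(R[x])$, apply the $EM$-$G$-graded hypothesis there, and push the factorization back down, checking regularity in $S^{-1}R[x]$ by clearing denominators again. You simply spell out two points the paper leaves implicit, namely why the lifted polynomial is homogeneous and a zero divisor, and why $c/s$ is genuinely a zero divisor of $S^{-1}R$.
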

\begin{proof} Assume $f(x)\in hZ(S^{-1}R[x])$. Then $f(x)=\frac{h(x)}{t}$ for some $t\in S$ and some $h(x)\in hZ(R[x])$. Since $R$ is an $EM-G-$graded ring, there exist $c\in Z(R)$ and $g(x)=\sum_{i=0}^nb_ix^i\in reg(R[x])$ such that $h(x)=cg(x)$. So $f(x)=c\frac{g(x)}{t}$. Suppose that there exists $\frac{d}{k}\in S^{-1}R$ such that $\frac{d}{k}\frac{g(x)}{t}=0$. Then there exists $u\in S$ such that $udg(x)=0$. Since $g(x)$ is regular in $R[x]$ and $u\in S$, we get $d=0$. Hence $\frac{g(x)}{t}$ is regular in $S^{-1}R[x]$. Therefore $S^{-1}R[x]$ is an $EM-G-$graded ring.
\end{proof}

If $R$ is a commutative ring with unity, then $reg(R)$ is muliplicatively closed subset of $R$, and the localization $T(R)=(reg(R))^{-1}R$ is called the total quotient ring of $R$. Abuosba and Ghanem \cite{ab19} deduced that if $R$ is an $EM-$ring then so is $T(R)$. A similar result is obtain for $EM-G-$graded rings. Let $hreg(R)$ be the set of regular homogenous elements of $R$ and define $hT(R)=(hreg(R))^{-1}R$.

\begin{cor}\label{c4} If $R$ is $EM-G-$graded ring then so is $hT(R)$.\end{cor}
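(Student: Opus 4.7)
The plan is to deduce this as a direct consequence of Theorem \ref{t4} applied to the multiplicatively closed set $S=hreg(R)$. So the main work is to verify that the two hypotheses of Theorem \ref{t4} are satisfied by $hreg(R)$, namely that $hreg(R)\subseteq h(R)$ and that $hreg(R)$ is multiplicatively closed in $R$.

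First I would note that the inclusion $hreg(R)\subseteq h(R)$ is immediate from the definition of $hreg(R)$. Next I would check multiplicative closure. If $s,t\in hreg(R)$, then $s\in R_\sigma$ and $t\in R_\tau$ for some $\sigma,\tau\in G$, so $st\in R_{\sigma\tau}$ is homogeneous. For regularity, if $r\in R$ satisfies $r(st)=0$, then $(rs)t=0$; since $t$ is regular we get $rs=0$, and since $s$ is regular we get $r=0$. Thus $st\in hreg(R)$. Also $1\in R_e$ is a regular homogeneous element, so $1\in hreg(R)$. Hence $hreg(R)$ is a multiplicatively closed subset of $h(R)$.

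With these two facts in hand, Theorem \ref{t4} applies directly to give that $hT(R)=(hreg(R))^{-1}R$ is an $EM-G$-graded ring with the gradation induced by Proposition \ref{p3}. I do not anticipate any substantive obstacle: the statement is essentially a specialization of Theorem \ref{t4}, and the only thing to verify is that $hreg(R)$ is an admissible choice of $S$, which follows from elementary properties of regular elements and the grading.
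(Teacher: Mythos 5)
Your proposal is correct and matches the paper's intent: the corollary is stated without proof precisely because it is the specialization of Theorem \ref{t4} to $S=hreg(R)$, and your verification that $hreg(R)$ is a multiplicatively closed subset of $h(R)$ is the only detail that needs checking. Nothing further is required.
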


\begin{thm}\label{t5} Let $R$ be a $G-$graded ring. If $hT(R)$ is $EM-G-$graded ring then for every $f(x)\in hZ(R[x])$ there exists $c\in R$ such that $Ann_{R[x]}(f)=Ann_{R[x]}(c)$.
\end{thm}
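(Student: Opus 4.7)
The plan is to lift $f$ into $hT(R)[x]$, invoke the $EM$-$G$-graded hypothesis on $hT(R)$ to obtain an annihilating content there, and then descend back to $R[x]$ using the regularity of elements of $hreg(R)$ on polynomial coefficients.

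First I would verify that $f$ remains a homogeneous zero divisor in $hT(R)[x]$. The canonical degree-preserving embedding $R \hookrightarrow hT(R)$ sends $f$ to a polynomial in $h(hT(R)[x])$ of the same degree. Given a witness $g\in R[x]\setminus\{0\}$ with $fg=0$, the image $g/1$ is still nonzero in $hT(R)[x]$: any relation $s\cdot g=0$ with $s\in hreg(R)$ would annihilate every coefficient of $g$ separately, and since $s$ is regular in $R$ this would force $g=0$. Hence $f\in hZ(hT(R)[x])$.

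Applying the $EM$-$G$-graded property of $hT(R)$ then yields a factorization $f(x)=c'h(x)$ with $c'\in Z(hT(R))$ and $h(x)\in reg(hT(R)[x])$. The observation of Abuosba and Ghanem stated just before Lemma \ref{l2} gives $Ann_{hT(R)[x]}(f)=Ann_{hT(R)[x]}(c')$. Writing $c'=c/s$ with $c\in R$ and $s\in hreg(R)$, and using that $s$ is a unit in $hT(R)$, I would conclude $Ann_{hT(R)[x]}(c')=Ann_{hT(R)[x]}(c)$. This $c\in R$ is the desired candidate.

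Finally I would descend the equality of annihilators back to $R[x]$. If $r\in R[x]$ satisfies $rf=0$ in $R[x]$, the relation persists in $hT(R)[x]$, so $rc=0$ in $hT(R)[x]$, which produces some $s'\in hreg(R)$ with $s'rc=0$ in $R[x]$; regularity of $s'$ on coefficients then collapses this to $rc=0$ in $R[x]$. The reverse inclusion is symmetric. The recurring obstacle is precisely this coefficient-wise clearance step: one must know that multiplication by an element of $hreg(R)$ is injective on all of $R[x]$, which follows from the uniqueness of the polynomial coefficient decomposition together with regularity in $R$. Once that mechanism is in place, the rest is routine localization bookkeeping.
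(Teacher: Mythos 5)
Your proposal is correct and follows essentially the same route as the paper: lift $f$ to $hZ(hT(R)[x])$, apply the $EM$-$G$-graded hypothesis there, pull the numerator of the annihilating content back as the element $c\in R$, and descend the annihilator equality using the fact that elements of $hreg(R)$ act regularly coefficient-wise on $R[x]$. The paper phrases the descent by clearing denominators in the factorization ($u(tf(x)-kg(x))=0$) and cancelling regular factors rather than by citing the $Ann(f)=Ann(c_f)$ observation, but this is only a difference in bookkeeping, and your explicit justification of the injectivity of $R[x]\hookrightarrow hT(R)[x]$ is a point the paper leaves implicit.
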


\begin{proof} Let $f(x)\in hZ(R[x])$. Then $\frac{f(x)}{1}\in hZ(hT(R)[x])$. So there exist $\frac{c}{t}\in Z(hT(R)[x])$, and $g(x)\in reg(hT(R)[x])$ such that $\frac{f(x)}{1}=\frac{k}{t}g(x)$. So we have $u(tf(x)-kg(x))=0$ for some  $u\in hreg(R[x]).$ Since $u$, $t$, and $g(x)$ are regular, we get $Ann_{R[x]}(f)=Ann_{R[x]}(k)$.
\end{proof}

Let $R_{\alpha}$, $\alpha\in I$ be a family of $G-$graded rings. Then $R=\prod_{\alpha\in I}R_{\alpha}$ is $G-$graded by the gradation $R_g=\prod_{\alpha\in I}(R_{\alpha})_g$, $g\in G$. Next we generalize \cite[Theorem 3.12]{ab19}.

\begin{thm}\label{t6}Let $R_{\alpha}$, $\alpha\in I$ be a family of $G-$graded rings, and $R=\prod_{\alpha\in I}R_{\alpha}$. Then $R$ is $EM-G-$graded if and only if $R_{\alpha}$ is $EM-G-$graded for each $\alpha\in I$.
\end{thm}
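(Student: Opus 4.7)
My plan is to prove both directions by working componentwise under the product decomposition, using the fact that in the product ring $R=\prod_\alpha R_\alpha$ each projection $\pi_\alpha:R\to R_\alpha$ extends naturally to $R[x]\to R_\alpha[x]$ and is compatible with the product grading: $f\in R[x]$ is homogeneous of degree $\sigma$ if and only if each projection $f_\alpha=\pi_\alpha(f)$ lies in $(R_\alpha)_\sigma[x]$. Conversely, any $f_\alpha\in R_\alpha[x]$ can be lifted to $R[x]$ by padding with zeros in the other coordinates, and this lift preserves homogeneity. Regularity and zero-divisor status transfer cleanly under these operations, which is what makes the whole argument go.

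For the forward direction, assume $R$ is $EM-G-$graded and fix $\alpha_0\in I$. Given a nonzero $f_{\alpha_0}\in hZ(R_{\alpha_0}[x])$ of degree $\sigma$, I would zero-pad it to an $f\in R[x]$ homogeneous of degree $\sigma$; zero-padding any nonzero annihilator of $f_{\alpha_0}$ produces a nonzero annihilator of $f$, so $f\in hZ(R[x])$. The hypothesis then gives $f=ch$ with $c\in Z(R)$ and $h\in reg(R[x])$, and projecting to the $\alpha_0$-coordinate yields $f_{\alpha_0}=c_{\alpha_0}h_{\alpha_0}$. Two things remain: first, $h_{\alpha_0}\in reg(R_{\alpha_0}[x])$, because any annihilator $w$ of $h_{\alpha_0}$ zero-pads to an annihilator of $h$, forcing $w=0$; and second, $c_{\alpha_0}\in Z(R_{\alpha_0})$, because any nonzero annihilator $u$ of $f_{\alpha_0}$ satisfies $uc_{\alpha_0}h_{\alpha_0}=0$, so $uc_{\alpha_0}=0$ by regularity of $h_{\alpha_0}$, producing a nonzero annihilator of $c_{\alpha_0}$ (and $c_{\alpha_0}\neq 0$ because $f_{\alpha_0}\neq 0$).

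For the reverse direction, assume each $R_\alpha$ is $EM-G-$graded and let $f\in hZ(R[x])$ be nonzero of degree $\sigma$. I would partition $I=I_1\sqcup I_2$ according to whether $f_\alpha$ is a zero divisor or zero in $R_\alpha[x]$, or regular. Observe $I_1\neq\emptyset$, for otherwise every $f_\alpha$ would be regular, forcing $f\in reg(R[x])$ by a routine componentwise argument and contradicting $f\in Z(R[x])$. For $\alpha\in I_1$ with $f_\alpha\neq 0$ the hypothesis supplies $f_\alpha=c_\alpha g_\alpha$ with $c_\alpha\in Z(R_\alpha)$ and $g_\alpha\in reg(R_\alpha[x])$; for $\alpha\in I_1$ with $f_\alpha=0$ take $c_\alpha=0$, $g_\alpha=1$; and for $\alpha\in I_2$ take $c_\alpha=1$, $g_\alpha=f_\alpha$. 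Reassembling componentwise gives $c\in R$, $g\in R[x]$ with $f=cg$; regularity of each $g_\alpha$ forces $g\in reg(R[x])$ by zero-padding, and picking any $\alpha_0\in I_1$ shows $c\in Z(R)$ via the zero-padded lift of an appropriate annihilator of $c_{\alpha_0}$. The main obstacle is essentially bookkeeping, namely correctly handling the cases $f_\alpha=0$, zero divisor, or regular, and confirming that the reassembled $c$ is a zero divisor of $R$; the grading plays essentially no role beyond ensuring each projection $f_\alpha$ is itself homogeneous of the same degree.
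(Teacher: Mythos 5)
Your proposal is correct and follows essentially the same route as the paper: zero-padding to lift a homogeneous zero divisor from a factor into the product for the forward direction, and componentwise application of the hypothesis with $c_\alpha=1$, $g_\alpha=f_\alpha$ at the regular coordinates for the converse. You are in fact slightly more careful than the paper's own argument, which glosses over the case $f_\alpha=0$ and the verification that the reassembled $c$ lies in $Z(R)$.
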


\begin{proof} Suppose $R$ is $EM-G-$graded. Fix $j\in I$, and let $f(x)=\sum_{i=0}^na_ix^i\in hZ(R_j[x])$. Consider the polynomial $g(x)=\sum_{i=0}^nb_{\alpha,i}x^i\in R[x]$, where for each $i$, $b_{j,i}=a_i$, and $b_{\alpha,i}=0$, for all $\alpha\neq j$. Since all $a_i's$ are in the same component of the $G-$grading of $R_{\alpha}$, we get $g(x)\in hZ(R[x])$. Hence there exist $c=(c_{\alpha})\in Z(R)$ and $g_1(x)=\sum_{i=0}^m (d_{\alpha,i})x^i \in reg(R[x])$ such that $g(x)=cg_1(x)$, and $m\geq n$. Then $f(x)=c_j\sum_{i=0}^md_{j,i}x^i$. It is clear that $c_j\neq0,$ because $0\neq a_n=c_jd_{j,n}$. Assume $yd_{j,i}=0$ for all $i$. Let $q=(q_{\alpha})$ where $q_j=y$ and $q_{\alpha}=0,$ when $\alpha\neq j$. Then $(q_{\alpha})(d_{\alpha,i})=0$ for all $i$. So $y=0$ and hence $\sum_{i=0}^n d_{j,i}x^i \in reg(R_{\alpha}[x])$.

For the converse assume $R_{\alpha}$ is $EM-G-$graded for all $\alpha\in I$. Let  $f(x)=\sum_{i=0}^n(a_{\alpha,i})x^i\in hZ(R[x])$. Then there exists $(b_{\alpha})\in R[x]$ such that $(b_{\alpha})(a_{\alpha,i})=(0)$ for all $i$. For each $\alpha\in I$, let $f_{\alpha}(x)=\sum_{i=0}^na_{\alpha,i}x^i$ and let $J=\{\alpha\in I\mid f_{\alpha}\notin reg(R_{\alpha})\}$. So, for each $\alpha\in J$, we have $f_{\alpha}(x)\in hZ(R_j[x])$, and hence $f_{\alpha}=c_{\alpha}g_{\alpha}(x)$ for some $c_{\alpha}\in Z(R_{\alpha})$ and some $g_{\alpha}(x)\in reg(R_{\alpha}[x])$. Now for $\alpha\notin J$, let $c_{\alpha}=1$ and $g_{\alpha}(x)=f_{\alpha}(x)$. Hence we have $f(x)=(c_{\alpha})(g_{\alpha}(x))$, where $(c_{\alpha})\in Z(R)$ and $(g_{\alpha}(x))\in reg(R[x])$ as desired.
\end{proof}

\begin{thm}\label{t7} If $R$ is an $EM-G-$graded ring then so is $R[x]$.
\end{thm}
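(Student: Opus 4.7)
The plan is to reduce a homogeneous zero-divisor in $(R[x])[y]$ to a suitable one-variable polynomial in $R[z]$, apply the hypothesis there, and then lift the resulting content back. Take $F \in hZ((R[x])[y])$ of degree $\sigma$; unwinding both levels of the grading, $F = \sum_{i,j} a_{ij}\, x^j y^i$ with every $a_{ij} \in R_\sigma$. The goal is to produce $c \in Z(R[x])$ and $G \in reg((R[x])[y])$ with $F = cG$.

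The first step is to locate a single nonzero scalar $r \in R$ annihilating every $a_{ij}$. McCoy's theorem applied to $(R[x])[y]$ supplies $a(x) \in R[x]\setminus\{0\}$ with $aF = 0$. Decomposing $a$ into its $G$-homogeneous components $a^{(\tau)}$ and observing that the pieces $a^{(\tau)}F$ lie in distinct components $R_{\tau\sigma}[x,y]$ of the induced $G$-grading on $R[x,y]$, I may assume $a$ is homogeneous, say $a \in R_\tau[x]$. Writing $F = \sum_i f_i(x) y^i$ with $f_i \in R_\sigma[x]$, each relation $af_i = 0$ together with the Armendariz-$G$-graded property (Theorem \ref{t3}, which follows from the hypothesis) forces every coefficient of $a$ to annihilate every $a_{ij}$; any nonzero coefficient of $a$ serves as $r$.

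Next, I collapse $F$ to a single-variable polynomial. Choose an integer $M > \deg_x F$ and set $\tilde F(z) = F(z, z^M) = \sum_{i,j} a_{ij}\, z^{j+Mi}$. The exponents $j+Mi$ separate distinct $(i,j)$ in the support of $F$, so $\tilde F \in R_\sigma[z]$ carries the same set of coefficients as $F$. In particular $r\tilde F = 0$, so $\tilde F \in hZ(R[z])$. By hypothesis $R$ is $EM$-$G$-graded, so there exist $c \in Z(R)$ and $\tilde G(z) = \sum_k b_k z^k \in reg(R[z])$ with $\tilde F = c\tilde G$.

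Finally, I lift $\tilde G$ back to two variables: for each $k$ write $k = Mi(k) + j(k)$ with $0 \le j(k) < M$ by division, and set $G(x,y) = \sum_k b_k\, x^{j(k)} y^{i(k)}$. The substitution $x \mapsto z$, $y \mapsto z^M$ carries $G$ to $\tilde G$ and $cG$ to $c\tilde G = \tilde F = F(z, z^M)$, and is injective on the monomials appearing, so $F = cG$. Since $C(G)$ in $R$ is generated by the same $b_k$'s as $C(\tilde G)$, the regularity of $\tilde G$ combined with McCoy yields $Ann_R(C(G)) = 0$, and hence $G \in reg(R[x,y]) = reg((R[x])[y])$; moreover $c \in Z(R) \subseteq Z(R[x])$ since a nonzero annihilator of $c$ in $R$ stays nonzero in $R[x]$. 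The main obstacle is precisely the bookkeeping in this last step: $\tilde G$ may carry nonzero $b_k$ at positions $k$ whose $(i(k), j(k))$ lies outside the support of $F$, but those coefficients satisfy $cb_k = 0$ by construction, which keeps $cG = F$ intact without spoiling the regularity of $G$ — regularity in $R[x,y]$ only sees the content ideal in $R$, not the way the coefficients are arranged.
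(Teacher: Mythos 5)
Your proof is correct and follows essentially the same route as the paper's: both collapse the bivariate homogeneous zero divisor into a univariate one carrying the same coefficient set (you via the Kronecker substitution $y\mapsto z^M$, the paper via concatenating the $f_i(x)$ with cumulative degree shifts), apply the $EM$-$G$-graded hypothesis there, and unfold the regular factor back into two variables. Your detour through Theorem \ref{t3} to extract a scalar annihilator $r$ is harmless but unnecessary, since the McCoy annihilator $h(x)$ of the $f_i$ already satisfies $h(z)\tilde{F}(z)=0$ after substitution, which is exactly how the paper certifies that the collapsed polynomial is a zero divisor.
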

\begin{proof} Let $f(x)=\sum_{i=0}^nf_i(x)y^i\in hZ(R[x,y])$. Then there exists $h(x)\in R[x]$ such that $h(x)f_i(x)=0$ for all $i$. Let
\[g(x)=f_0(x)+f_1(x)x^{deg(f_0)+1}+f_2(x)x^{deg(f_0)+deg(f_1)+2}+\dots+f_n(x)x^{\sum_{i=0}^{n-1}deg(f_i)+n}.\]
So $hg=0$. Moreover, since $f(x,y)\in h(R[x,y])$, there exists $\lambda\in G$ such that $R_{\lambda}$ contains all coefficients of $f_i(x)$, for all $i$. Hence $g(x)\in hZ(R[x])$. Thus there exists $c\in Z(R)$ and $g_1(x)=\sum_{i=0}^mb_ix^i\in reg(R[x])$ such that $g(x)=cg_1(x)$. So $f_0(x)=c\sum_{i=0}^{deg(f_0)}b_ix^i=cw_0(x)$, $f_1(x)=c\sum_{i=0}^{deg(f_1)}b_{i+deg(f_0)+1}x^i=cw_1(x)$, and so on. Thus $f(x,y)=c\sum_{i=0}^{n}w_i(x)y^i$, and since $\cap Ann(b_i)=0$, we have $\sum_{i=0}^{n}w_i(x)y^i\in reg(R[x,y])$. Therefore $R[x]$ is $EM-G-$graded.
\end{proof}

\begin{cor}\label{c5} Let $R$ be an $EM-G-$graded ring and $k$ be a positive integer. Then $R[x_1,x_2,\dots,x_k]$ is an $EM-G-$graded ring.
\end{cor}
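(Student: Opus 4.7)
The plan is to prove the statement by induction on $k$, using Theorem \ref{t7} as both the base case and the engine of the inductive step. The essential observation is that iterated polynomial rings can be unwound as $R[x_1, \ldots, x_k] = R[x_1, \ldots, x_{k-1}][x_k]$, and the $G$-grading carries through this identification compatibly.

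For the base case $k = 1$, the statement is exactly Theorem \ref{t7}. For the inductive step, I would assume that $S := R[x_1, \ldots, x_{k-1}]$ is $EM$-$G$-graded with the natural grading $S_\sigma = R_\sigma[x_1, \ldots, x_{k-1}]$. Then I would verify that the $G$-grading on $S[x_k]$ given by $(S[x_k])_\sigma = S_\sigma[x_k]$, which equals $R_\sigma[x_1, \ldots, x_{k-1}][x_k] = R_\sigma[x_1, \ldots, x_k]$, coincides with the natural grading on $R[x_1, \ldots, x_k]$ obtained by viewing each $R_\sigma$ as the degree-$\sigma$ component and treating the $x_i$ as grade-neutral indeterminates. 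Applying Theorem \ref{t7} to the $EM$-$G$-graded ring $S$ then yields that $S[x_k] = R[x_1, \ldots, x_k]$ is $EM$-$G$-graded, completing the induction.

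I do not expect any real obstacle; the only thing to be careful about is the bookkeeping of the gradings, namely confirming that the inductive hypothesis gives a gradation on $R[x_1, \ldots, x_{k-1}]$ of the exact shape required to invoke Theorem \ref{t7} (so that after one more variable is adjoined, the resulting grading is again the canonical one on $R[x_1, \ldots, x_k]$). Once this identification is made explicit, the conclusion is immediate from Theorem \ref{t7}.
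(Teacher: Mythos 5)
Your induction on $k$ via Theorem \ref{t7}, identifying $R[x_1,\dots,x_k]$ with $R[x_1,\dots,x_{k-1}][x_k]$ and checking that the grading $(S[x_k])_\sigma=S_\sigma[x_k]$ agrees with the canonical one, is exactly the intended argument; the paper states the corollary as an immediate consequence of Theorem \ref{t7} without further proof. Your version is correct and matches the paper's approach.
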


\begin{rem}\label{r1} If $R$ is a $G-$graded ring then  $(x^2)$ is a $G-$graded ideal of $R[x]$. So $R[x]/(x^2)$ is a $G-$graded ring with gradation $(R[x]/(x^2))_g=R_g[x]+(x^2)/(x^2)$ i.e. $a+bX\in h(R[x]/(x^2))$ if and only if there exists $\sigma\in G$ such that $a,b\in R_\sigma$.
\end{rem}

\begin{thm}\label{t8} Let $R$ be $G-$graded such that $h(R)\cap Z(R)=\{0\}$. Then $H=R[x]/(x^2)$ is an $EM-G-$graded ring.
\end{thm}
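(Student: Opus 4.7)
The plan is to show that the hypothesis $h(R)\cap Z(R)=\{0\}$ is restrictive enough to force every homogeneous zero divisor in $H[y]$ to be divisible by $X$, and that after factoring $X$ out, what remains is regular in $H[y]$. Throughout, I will use the identification of a homogeneous element of $H_\sigma$ with a pair $(a,b)\in R_\sigma\times R_\sigma$ via $a+bX$ (Remark~\ref{r1}), and the fact that $X^2=0$.

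First I would set up the data. Let $f(y)\in hZ(H[y])$ be nonzero. Since $f$ is homogeneous, there exists $\sigma\in G$ with every coefficient in $H_\sigma$, so I can write $f(y)=A(y)+XB(y)$ with $A,B\in R_\sigma[y]$. Choose a nonzero $g(y)=C(y)+XD(y)\in H[y]$ (with $C,D\in R[y]$) such that $fg=0$. Using $X^2=0$, expanding yields the two identities $AC=0$ and $AD+BC=0$ in $R[y]$.

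The key step is to show $A=0$. If some coefficient $a_i$ of $A$ were nonzero, then $a_i\in R_\sigma\setminus\{0\}\subseteq h(R)$, so the hypothesis $h(R)\cap Z(R)=\{0\}$ would give $a_i\in reg(R)$. Now McCoy's theorem (any annihilator of $A$ in $R[y]$ is witnessed by an element of $R$ annihilating every coefficient of $A$) applied to a putative nonzero annihilator of $A$ would force $a_i=0$, a contradiction; hence $A\in reg(R[y])$. But then $AC=0$ forces $C=0$, and $AD=0$ then forces $D=0$, making $g=0$, which contradicts our choice of $g$. Thus $A=0$ and $f(y)=X\cdot B(y)$.

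It remains to verify that $X$ is an annihilating content, i.e.\ $X\in Z(H)$ and $B(y)\in reg(H[y])$. The first is immediate from $X\cdot X=0$. For the second, note that $B\neq0$ because $f\neq0$, and any nonzero coefficient of $B$ lies in $R_\sigma$ and hence is regular in $R$ by hypothesis; so $B\in reg(R[y])$ by McCoy again. Now given any $h(y)=E(y)+XK(y)\in H[y]$ with $Bh=0$, expanding yields $BE=0$ and $BK=0$ in $R[y]$, which forces $E=K=0$ and therefore $h=0$. Hence $B(y)\in reg(H[y])$, and $f(y)=X\cdot B(y)$ is the desired annihilating content decomposition, proving that $H$ is $EM$-$G$-graded.

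The only point requiring care is the appeal to McCoy together with the observation that a nonzero homogeneous coefficient is automatically regular; once that is in hand the rest is bookkeeping with the relation $X^2=0$. I expect no real obstacle beyond spotting that the hypothesis immediately kills the ``$A$-part'' of any homogeneous zero divisor, which is what forces the factorization through $X$.
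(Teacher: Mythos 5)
Your proof is correct and follows essentially the same route as the paper's: use $h(R)\cap Z(R)=\{0\}$ to kill the ``$a$-part'' of every coefficient, factor out $X$, and exhibit $X$ as the annihilating content. You actually supply details the paper leaves implicit (the McCoy argument showing the $a_i$ must vanish, and the verification that the remaining factor is regular in $H[y]$), but the underlying idea is identical.
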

\begin{proof}
Let $f(y)=\sum_{i=0}^n(a_i+b_iX)y^i\in hZ(H[y])$. Then by the argument in Remark \ref{r1} we get that $a_i,b_i\in h(R)$ and $a_i\in Z(R)$ for all $i$. Hence $a_i=0$ for all $i$. So we have $f(y)=X\sum_{i=0}^nb_iy^i$, which yields $X$ is an annihilating content for $f$. Thus $H$ is an $EM-G-$graded ring.
\end{proof}

\begin{thm}\label{t9} Let $R$ be a $G-$graded ring. If $H=R[x]/(x^2)$ is an $EM-G-$graded ring, then so is $R$.
\end{thm}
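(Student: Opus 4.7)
The plan is to embed $R$ into $H$ via $a \mapsto a + (x^2)$ and pull the $EM$-$G$-graded property of $H$ back down to $R$, using the nilpotent $X := x + (x^2)$ as leverage to isolate an annihilating content living in $R$.

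First I would take a nonzero $f(y) = \sum_{i=0}^n a_i y^i \in hZ(R[y])$, so all $a_i$ lie in some common component $R_\sigma$ and there is a nonzero $s(y) \in R[y]$ with $s(y) f(y) = 0$. Viewing $f(y)$ inside $H[y]$ through the embedding, each coefficient $a_i + 0\cdot X$ lands in $H_\sigma$ by the description of the induced gradation in Remark~\ref{r1}, so $f(y) \in h(H[y])$. The polynomial $s(y)$ remains nonzero in $H[y]$ and still annihilates $f(y)$ there, hence $f(y) \in hZ(H[y])$.

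Next I would invoke the $EM$-$G$-graded hypothesis on $H$ to obtain $c + dX \in Z(H)$ and $g(y) \in reg(H[y])$ with $f(y) = (c + dX)g(y)$. Decompose $g(y) = g_0(y) + g_1(y)X$ with $g_0, g_1 \in R[y]$. Since $X^2 = 0$, expansion gives
$$f(y) = c\, g_0(y) + \bigl(c\, g_1(y) + d\, g_0(y)\bigr) X,$$
and comparing the $X$-free part of this equality (the left side has no $X$-component) yields $f(y) = c\, g_0(y)$.

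The heart of the argument, and the only real obstacle, is showing that $g_0(y) \in reg(R[y])$; the trick is multiplication by $X$. If $\alpha(y) \in R[y]$ satisfies $\alpha g_0 = 0$, then $\alpha(y)X\cdot g(y) = \alpha X(g_0 + g_1X) = (\alpha g_0)X = 0$ in $H[y]$, so the regularity of $g$ forces $\alpha(y)X = 0$ in $H[y]$, which in turn forces $\alpha(y) = 0$ in $R[y]$. Once $g_0$ is known regular in $R[y]$, any nonzero annihilator $s(y)$ of $f$ satisfies $s\,c\,g_0 = 0$, hence $s\,c = 0$, and picking a nonzero coefficient of $s$ places $c$ in $Z(R)$. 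Thus $c$ serves as an annihilating content of $f(y)$, and $R$ is $EM$-$G$-graded.
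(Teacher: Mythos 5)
Your proposal is correct and follows essentially the same route as the paper's proof: embed $f$ into $hZ(H[y])$, factor it there, read off the $X$-free part to get $f = c\,g_0$, and use multiplication by $X$ against the regularity of $g$ to conclude $g_0 \in reg(R[y])$. You simply spell out the two steps (regularity of $g_0$ and $c \in Z(R)$) that the paper leaves implicit.
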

\begin{proof}
Let $f(y)=\sum_{i=0}^nr_iy^i\in hZ(R[y])$. Then $f(y)\in hZ(H[y])$. Therefore $f(y)=(c+dX)\sum_{i=0}^{m}(a_i+b_iX)y^i$, with $\cap_{i=0}^mAnn(a_i+b_iX)=\{0\}$. So we have $\cap_{i=0}^mAnn(a_i)=\{0\}$ and $r_i=ca_i$ for each $i$. Hence $f(y)=c\sum_{i=0}^na_iy^i$. Therefore $R$ is an $EM-G-$graded ring.
\end{proof}

\begin{thm}\label{t10} Let $R$ be $G-$graded ring such that for each $a\in h(R)$ there exists $b\in E(R)$ such that $ann(a)=bR$. Then $R$ is an $EM-G-$graded ring.
\end{thm}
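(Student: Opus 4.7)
The plan is to build the annihilating content explicitly from the idempotent generators of the annihilators. Fix a nonzero $f(x)=\sum_{i=0}^{n} a_i x^i \in hZ(R[x])$; homogeneity forces all $a_i$ to lie in a common component $R_{\sigma}\subseteq h(R)$, so by hypothesis each $Ann_R(a_i)=e_iR$ for some $e_i\in E(R)$, and in particular $e_i a_i=0$. The first step is to set $e=e_0 e_1\cdots e_n$, which is itself idempotent, and to verify that $\bigcap_{i=0}^{n} e_i R=eR$ (the containment $\supseteq$ is clear, and the reverse follows because an element lying in every $e_iR$ is fixed by multiplication by each $e_i$ and therefore by their product). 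Since $f$ is a zero divisor in $R[x]$, McCoy's theorem supplies a nonzero $r\in R$ with $rf=0$, placing $r\in eR$ and forcing $e\neq 0$.

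Next I would let $u=1-e$ and propose the decomposition
\[
f(x)=u\cdot g(x),\qquad g(x)=\sum_{i=0}^{n}(a_i+e)\, x^i.
\]
The identity $f=ug$ reduces to the coefficient check $u(a_i+e)=ua_i+ue=a_i+0=a_i$, where $ua_i=a_i$ holds because $ea_i=0$ (as $e$ is a multiple of $e_i$). For the membership $u\in Z(R)$, one has $ue=0$ with $e\neq 0$, and $u$ itself is nonzero: if $u=0$ then $e=1$, hence $\bigcap_i Ann_R(a_i)=R$, forcing every $a_i=0$ and contradicting $f\neq 0$.

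The substantive step is to verify $g\in reg(R[x])$, and by McCoy's theorem it suffices to show $Ann_R(g)=0$. If $s\in R$ satisfies $s(a_i+e)=0$ for every $i$, then $sa_i=-se$ for each $i$; multiplying by $e$ and using $ea_i=0$ gives $se=0$, whence $sa_i=0$ for all $i$. This places $s\in\bigcap_{i=0}^{n} Ann_R(a_i)=eR$, so $s=se$, and combined with $se=0$ we conclude $s=0$.

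I expect the only subtle point to be the choice of the companion polynomial $g$. The naive guess $g=f$ fails because $f$ itself is annihilated by $e$; the correction $a_i\mapsto a_i+e$ precisely cancels that obstruction while being absorbed upon multiplication by $u=1-e$. Everything else is a routine manipulation of idempotents together with two invocations of McCoy's theorem, one to extract the common annihilator $e$ and one to certify that $g$ is regular.
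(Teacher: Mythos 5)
Your proof is correct and is essentially the paper's argument: your $u=1-e$ with $e=e_0e_1\cdots e_n$ is exactly the paper's annihilating content $b=1-\prod_{i=0}^{n}(1-b_i)$ (where $b_i=1-e_i$), and your companion polynomial with coefficients $a_i+e$ coincides with the paper's $\sum_{i=0}^{n}u_i(b_i+1-b)x^i$ after simplification. Your derivation is just more direct, certifying $c_f\in Z(R)$ and $g\in reg(R[x])$ via McCoy's theorem instead of routing through the intermediate factorization $a_i=u_ib_i$ with $u_i$ regular.
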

\begin{proof} Let $f(x)=\sum_{i=0}^na_ix^i\in hZ(R[x])$. Then $a_i\in h(R)$ for all $i$. So for each $i$ there exists $b_i\in E(R)$ and $u_i\in reg(R)$ such that $a_i=u_ib_i$. Let $b=1-\prod_{i=0}^n(1-b_i)$. Clearly $1-b\in E(R)$, and so $b\in E(R)$. Since $b_i(1-b_i)=0$, we get $b(b_i+1-b)=bb_i=b_i-b_i\prod_{j=0}^n(1-b_j)=b_i$ and $1=\sum_{i=0}^n(b_i+1-b)-\prod_{i=0}^n(b_i+1-b)$. Hence $Ann(b_0+1-b,b_1+1-b,\dots,b_n+1-b)=\{0\}$ and for each $i$, $a_i=bu_i(b_i+1-b).$  Thus $f(x)=bg(x)$ where $b\in Z(R)$ and $g(x)=\sum_{i=0}^nu_i(b_i+1-b)x^i\in reg(R[x])$. Therefore $R$ is $EM-G-$graded ring.
\end{proof}

Let $R$ be a ring and $M$ be an $R-$module. Then the idealization $R(+)M$ is the ring whose elements are those of $R\times M$ equipped with addition and multiplication defined by $(r_1,m_1)+(r_2,m_2)=(r_1+r_2,m_1+m_2)$ and $(r_1,m_1)(r_2,m_2)=(r_1r_2,r_1m_2+r_2m_1)$ respectively. The annihilator of the module $M$ in $R$ is the set $Ann_R(M)=\{r\in R\mid rM=\{0\}\}$. It is well known that the idealization $R(+)M$ is $\mathbb{Z}_2-$graded by the gradation $(R(+)M)_0=R\oplus 0$ and $(R(+)M)_1=0\oplus M$.

\begin{thm}\label{t11} Let $R$ be a ring and $M$ be an $R-$module such that $Ann_R(M)=\{0\}$. If $R(+)M$ is an $EM-\mathbb{Z}_2-$graded by the gradation $H_0=R\oplus 0$ and $H_1=0\oplus M$, then $R$ is an $EM-$ring.
\end{thm}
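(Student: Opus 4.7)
The plan is to start with a nonzero zero-divisor polynomial $f(x)=\sum_{i=0}^n a_i x^i \in Z(R[x])$ and lift it to $(R(+)M)[x]$ in the most natural way, namely $\tilde f(x)=\sum_{i=0}^n (a_i,0) x^i$. All coefficients of $\tilde f$ lie in $H_0=R\oplus 0$, so $\tilde f$ is homogeneous. Moreover, if $g(x)=\sum b_j x^j$ is a nonzero annihilator of $f$ in $R[x]$, then $\tilde g(x)=\sum (b_j,0)x^j$ is a nonzero annihilator of $\tilde f$ in $(R(+)M)[x]$, so $\tilde f\in hZ((R(+)M)[x])$. Applying the $EM$-$\mathbb{Z}_2$-graded hypothesis, there exist $(c,m)\in Z(R(+)M)$ and $H(x)=\sum_{i=0}^n (d_i,n_i)x^i\in \mathrm{reg}((R(+)M)[x])$ with $\tilde f(x)=(c,m)H(x)$. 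Comparing first components gives $a_i=cd_i$ for each $i$, hence $f(x)=c\,h_0(x)$ where $h_0(x)=\sum d_i x^i\in R[x]$. I would then argue $c\ne 0$ (otherwise $\tilde f=0$).

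The first main point to verify is $c\in Z(R)$. Starting from $\tilde g\,\tilde f = 0$ and substituting $\tilde f=(c,m)H$, one gets $\bigl(\tilde g(x)(c,m)\bigr)H(x)=0$, and since $H$ is regular in $(R(+)M)[x]$ we obtain $\tilde g(x)(c,m)=0$. Reading off first components yields $b_j c=0$ for all $j$; since some $b_j\ne 0$, this gives $c\in Z(R)$.

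The harder point is $h_0\in \mathrm{reg}(R[x])$, and this is where the hypothesis $\mathrm{Ann}_R(M)=\{0\}$ enters essentially. Suppose $t(x)=\sum t_j x^j\in R[x]$ satisfies $t(x)h_0(x)=0$. For an arbitrary $u\in M$, consider $(0,u)\tilde t(x)=\sum (0,ut_j)x^j$ in $(R(+)M)[x]$. Multiplying by $H(x)$ and using $t h_0=0$, every coefficient collapses to $(0, u\sum_{j+i=k} t_j d_i)=(0,0)$, so $(0,u)\tilde t(x)\,H(x)=0$. Regularity of $H$ forces $(0,u)\tilde t(x)=0$, i.e.\ $ut_j=0$ for all $j$. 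Since $u\in M$ was arbitrary, $t_j\in \mathrm{Ann}_R(M)=\{0\}$ for all $j$, hence $t=0$. Thus $h_0\in \mathrm{reg}(R[x])$, and $f=c\,h_0$ is the desired annihilating content decomposition, proving $R$ is an $EM$-ring.

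The principal obstacle is the regularity step, because an arbitrary lift $\tilde t$ of $t$ need not annihilate $H$; multiplying by the constant $(0,u)$ is the trick that kills the inconvenient second components while preserving the information we need, after which $\mathrm{Ann}_R(M)=\{0\}$ finishes the job.
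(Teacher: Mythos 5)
Your proof is correct and follows essentially the same route as the paper's: lift $f$ to the homogeneous polynomial $\sum_{i=0}^n(a_i,0)x^i$ in $H_0[x]$, apply the graded $EM$ hypothesis, read off first components, and use multiplication by elements of $0\oplus M$ together with $Ann_R(M)=\{0\}$ to force regularity of the first-component factor. The only differences are cosmetic: you check $c\in Z(R)$ explicitly and test regularity against polynomial annihilators directly, whereas the paper argues with a single annihilating ring element.
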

\begin{proof} Suppose $R(+)M$ is an $EM-\mathbb{Z}_2-$graded by the gradation $H_0=R\oplus 0$ and $H_1=0\oplus M$. Let $f(x)=\sum_{i=0}^na_ix^i\in Z(R[x])$. Then $Ann_{R(+)M}\{(a_i,0)\}\neq\{0\}$. So $\sum_{i=0}^n(a_i,0)=x^i=(c,m)\sum_{i=0}^k(b_i,m_i)x^i$ with $k\geq n$ and $Ann_{R(+)M}\{(b_i,m_i)\}=\{0\}$. Suppose $0\neq w\in Ann_R\{\alpha_i\}$. Since $Ann_R(M)=\{0\}$, there exists $t\in M$ such that $wt\neq0$. However, since $(0,wt)(b_i,m_i)=(0,wtb_i)=(0,0)$ for all $i$, we get $(0,wt)\in Ann_{R(+)M}\{(b_i,m_i)\}=\{0\}$, a contradiction. Therefore $f(x)=c\sum_{i=0}^nb_ix^i$ with $Ann_R\{b_i\}=\{0\}$ as desired.
\end{proof}

Ganam and Abuosba in \cite{gh18} proved that if $R(+)R$ (equivalently $R[x]/(x^2)$) is an $EM-$ring then so is $R$. However the converse of this  result is not true. From Theorem \ref{t2} and Theorem \ref{t11} we obtain the following result.

\begin{cor}\label{c7} A ring $R$ is an $EM-$ring if and only if $H=R(+)R$ is an $EM-Z_2-$graded ring with gradation $H_0=R\oplus 0$ and $H_1=0\oplus R$.
\end{cor}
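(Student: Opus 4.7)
The corollary is essentially a clean packaging of two earlier theorems, so my plan is to split into the two implications and simply verify the hypotheses of Theorem \ref{t2} and Theorem \ref{t11}.

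For the forward direction, assume $R$ is an $EM$-ring and view $H = R(+)R$ as the $\mathbb{Z}_2$-graded ring with $H_0 = R\oplus 0$ and $H_1 = 0\oplus R$. Since $H_0 \cong R$ as rings, the component $H_e = H_0$ is an $EM$-ring. To apply Theorem \ref{t2}, I need a distinguished element in each nonzero component. For $\sigma = 0$ take $u_0 = (1,0)$, which is trivial. For $\sigma = 1$, I would propose $u_1 = (0,1) \in H_1$ and verify the two required properties: first, $H_0 \cdot u_1 = \{(r,0)(0,1) : r\in R\} = \{(0,r): r\in R\} = H_1$; second, $(r,0)(0,1) = (0,r)$, which vanishes only when $r = 0$, so $\mathrm{Ann}_{H_0}(u_1) = \{0\}$. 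Theorem \ref{t2} then directly yields that $H$ is $EM\text{-}\mathbb{Z}_2$-graded.

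For the converse, assume $H = R(+)R$ is $EM\text{-}\mathbb{Z}_2$-graded with the stated gradation. I want to invoke Theorem \ref{t11} with the choice $M = R$ (viewed as an $R$-module over itself). The only hypothesis to check is $\mathrm{Ann}_R(R) = \{0\}$, which is immediate from the presence of the identity $1 \in R$: if $rR = \{0\}$ then $r = r\cdot 1 = 0$. Theorem \ref{t11} then gives that $R$ is an $EM$-ring, closing the equivalence.

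There is no serious obstacle here; the only thing worth being careful about is matching conventions, specifically that the gradation chosen for $H$ in the corollary is exactly the one stipulated in Theorem \ref{t11} (with $H_0 = R\oplus 0$, $H_1 = 0\oplus R$), and that the element $(0,1)$ used in the forward direction genuinely satisfies both conditions in Theorem \ref{t2}. Once those verifications are in place, the proof is just a two-line citation of the two theorems.
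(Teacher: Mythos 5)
Your proposal is correct and matches the paper exactly: the paper derives Corollary \ref{c7} by citing Theorem \ref{t2} for the forward direction and Theorem \ref{t11} (with $M=R$) for the converse, which is precisely your argument. Your explicit verification that $u_1=(0,1)$ satisfies $H_1=H_0u_1$ and $\mathrm{Ann}_{H_0}(u_1)=\{0\}$, and that $\mathrm{Ann}_R(R)=\{0\}$, supplies the details the paper leaves implicit.
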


\section{Further Questions}\label{sec3}

In this section we highlight two problems that may be of interest for future research.

Several results on $EM-$rings can be extended to $EM-G-$graded rings if we can show that every homogenous zero divisor polynomial has at least one homogenous annihilating content. The rings discussed in Theorem \ref{t2}, Corollary \ref{c6}, and Corollary \ref{c66} have this property, however we don't know if this always the case. Based on this observation we ask the following question.

\begin{ques} In an $EM-G-$graded ring $R$, is it guaranteed that every polynomial in $hZ(R[x])$ has an annihilating content that belongs to $hZ(R)$? If not, then what kinds of graded rings have this property?
\end{ques}

The second problem is related to the notion of strongly $EM$ rings that was defined in \cite{ab19}. An $EM$ ring $R$ is called strongly $EM$ ring if every zero divisor power series has an annihilating content. If $R$ is $G-$graded ring then $R[[x]]$ is $G-$graded by the gradation $(R[[x]])_g=R_g[[x]]$, $g\in G$. We define an $EM-G-$graded ring $R$ to be strongly $EM-G-$graded ring if every nonzero homogeneous power series in $R[[x]]$ has an annihilating content. It is not guaranteed that if every power series in $hZ(R[[x]])$ has an annihilating content then every polynomial in $hZ(R[x])$ has an annihilating content. So similar to a question from \cite{ab19}, we ask the following question.

\begin{ques} What types of graded rings have the property that if every power series in $hZ(R[[x]])$ has an annihilating content then every polynomial in $hZ(R[x])$ has an annihilating content?
\end{ques}

The following theorem describes one of these types.

\begin{thm}\label{t12}  Let $R$ be a $G-$graded ring such that $Z(R[[x]])$ is an ideal of $R[[x]]$. Then $R$ is strongly $EM-G-$ring if and only if every homogeneous power series in $R[[x]]$ has an annihilating content.
\end{thm}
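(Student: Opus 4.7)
The plan is to observe that the forward implication is immediate from the definition: a strongly $EM-G-$graded ring is, by definition, an $EM-G-$graded ring in which every homogeneous zero divisor power series already carries an annihilating content. All the work goes into the converse: assuming every homogeneous zero divisor power series in $R[[x]]$ has an annihilating content, I would show $R$ is $EM-G-$graded, so the additional power-series condition automatically promotes this to ``strongly''.

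Given $f(x) = \sum_{i=0}^n a_i x^i \in hZ(R[x])$, I would view $f$ inside $R[[x]]$, where it remains a nonzero homogeneous zero divisor (any nonzero polynomial annihilator of $f$ in $R[x]$ still annihilates $f$ in $R[[x]]$). The hypothesis then yields $c_f \in Z(R)$ and $g_1 = \sum_{i \geq 0} b_i x^i \in reg(R[[x]])$ with $f = c_f g_1$. Comparing coefficients forces $a_i = c_f b_i$ for $i \leq n$ and $c_f b_i = 0$ for $i > n$, so truncating to $h_1 = \sum_{i=0}^n b_i x^i \in R[x]$ yields the candidate factorization $f = c_f h_1$ with $c_f \in Z(R)$. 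What remains is to certify that $h_1 \in reg(R[x])$.

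This is the only place the ideal hypothesis on $Z(R[[x]])$ enters, and it is the main obstacle of the argument. Suppose for contradiction some nonzero $q \in R[x]$ satisfies $h_1 q = 0$; then $h_1 \in Z(R[[x]])$. Writing $g_1 = h_1 + x^{n+1} r$ with $r = \sum_{i \geq 0} b_{n+1+i} x^i$, I note that $c_f \neq 0$ (else $f = 0$) and that $c_f$ annihilates $x^{n+1} r$ coefficient-wise, so $x^{n+1} r \in Z(R[[x]])$ (trivially so if the tail vanishes). Because $Z(R[[x]])$ is assumed to be an ideal of $R[[x]]$, it is closed under addition, whence $g_1 = h_1 + x^{n+1} r \in Z(R[[x]])$, contradicting $g_1 \in reg(R[[x]])$. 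Hence $h_1$ is regular in $R[x]$, $c_f$ is an annihilating content of $f$, and $R$ is $EM-G-$graded as required. The delicate point --- and the reason the ideal hypothesis is not superfluous --- is that without the closure of $Z(R[[x]])$ under addition, a regular power series may perfectly well have a zero-divisor polynomial truncation, and the factorization would fail to descend from $R[[x]]$ to $R[x]$.
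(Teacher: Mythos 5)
Your proposal is correct and follows essentially the same route as the paper: factor the homogeneous zero-divisor polynomial in $R[[x]]$, truncate the regular power-series factor at degree $n$, observe that the tail lies in $Z(R[[x]])$ because $c_f$ annihilates it, and use the assumption that $Z(R[[x]])$ is an ideal (closed under addition) to conclude the truncation must be regular. Your explicit handling of the edge cases ($c_f\neq 0$, vanishing tail) is a minor tidying of the same argument.
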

 \begin{proof} Suppose that every homogeneous power series in $R[[x]]$ has an annihilating content. Let $f(x)=\sum_{i=0}^{n}a_ix^i\in hZ(R[[x]])$. Then $f(x)=c_ff_1$ for some $c_f\in Z(R)$ and $f_1(x)=\sum_{i=0}^{\infty}b_ix^i\in reg(R[[x]])$. Thus we have, $a_i=c_fb_i$, for $i=0,1,\dots, n$, and $0=c_fb_i$, for $i\geq n+1$. So $\sum_{i=n+1}^{\infty}b_ix^i\in Z(R[[x]])$. Since $Z(R[[x]])$ is an ideal of $R[[x]]$ and $f_1(x)\in reg(R[[x]])$, we have $\sum_{i=0}^{n}b_ix^i\in reg(R[[x]])$. Thus $f(x)=c_f\sum_{i=0}^{n}b_ix^i$, where $\sum_{i=0}^{n}b_ix^i\in reg(R[x])$. Therefore $R$ is $EM-G-$graded.
\end{proof}


\begin{thebibliography}{99}
\bibitem{Dawwas 1} R. Abu-Dawwas, On graded semi-prime rings, \textit{Proceedings of the Jangjeon Mathematical Society}, \textbf{20} (1), (2017), 19-22.

%\bibitem{Dawwas 2} R. Abu-Dawwas, Graded semiprime and graded weakly semiprime ideals, \textit{Italian Journal of Pure and Applied Mathematics}, \textbf{36}, (2016), 535-542.

\bibitem{Da10} R. Abu-Dawwas, More on Crossed Product over the Support of Graded Rings, \textit{International Mathematical Forum}, \textbf{5} (63), (2010), 3121 - 3126.

%\bibitem{Dawwas Ceken} R. Abu-Dawwas and M. Refai, Semicommutativity of graded rings and graded modules, \textit{submitted}.

\bibitem{ab17} E. Abuosba and O. Alkam, When zero-divisor graphs are divisor graphs, Turk. J. Math. 41 (2017), 797-807.

\bibitem{ab19} E. Abuosba and M. Ghanam, Annihilating Content in Polynomial and Power Series Rings, Journal of Korean Mathematical Society, 56(5), (2019), 1403-1418.

\bibitem{zo19} K. Al-Zoubi, F. Al-Turman, and E. Celikel, gr-n-ideal in graded commutative rings, Acta Univ. Sapientiae, Mathematica, 11(1), 2019, 18-28.

\bibitem{an85} D. D. Anderson, D. F. Anderson, and R. Markanda, The rings $R(X)$ and $R\left\langle X\right\rangle$,  J. Algebra, 95, 1985, 96-115.


\bibitem{Bataineh Dawwas} M. Bataineh and R. Abu-Dawwas, Graded almost 2-absorbing structures, \textit{JP Journal of Algebra, Number Theory and Application}s, \textbf{39} (1), (2017), 63-75.

\bibitem{Cohen} P. M. Cohn, Reversible Rings, \textit{Bulliten of London Mathematical Society}, \textbf{31} (6), (1999), 641-648.

\bibitem{Cohen Rowen} M. Cohen and L. Rowen, Group graded rings, \textit{Comm. in Algebra}, \textbf{11}, (1983), 1253-1270.

\bibitem{gh18} M. Ghanam and E. Abuosba, Some Extensions of Generalized Morphic Rings and $EM-$Rings, An. St. Univ. Ovidius Constanta, \textbf{26}(1), (2018), 111-123.

\bibitem{Nastasescue}  C. Nastasescu and F. Van Oystaeyen, \textit{Methods of graded rings, Lecture Notes in Mathematics, 1836}, Springer-Verlag, Berlin, 2004.

\bibitem{Refai Zoubi} M. Refai and K. Al-Zoubi, On graded primary ideals, \textit{Turkish J. Mathematics}, \textbf{28}, (2004), 217-229.

\bibitem{Refai Hailat} M. Refai, M. Hailat and S. Obeidat, Graded radicals and graded prime spectra, \textit{Far East Journal of Mathematical Sciences, Part I}, (2000), 59-73.

\bibitem{Wei} J. C. Wei, Certain rings whose simple singular modules are nil-injective, \textit{Turkish Journal of Mathematics}, \textbf{32}, (2008), 1-16.

\bibitem{Yu} H. P. Yu, On quasi-duo rings, \textit{Glasgow Mathematical Journal}, \textbf{37}, (1995), 21-31.

\end{thebibliography}
\end{document}